\numberwithin{equation}{section}
\numberwithin{equation}{section}
\newtheorem{thm}{Theorem}[section]
\newtheorem{proposition}{Proposition}[section]
\newtheorem{corollary}{Corollary}[section]
\newtheorem{lemma}{Lemma}[section]
\newtheorem{definition}{Definition}[section]
\newtheorem{Rem}{Remark}[section]
\newtheorem{assumption}{Assumption}[section]
\def\eps{\varepsilon}
\def\ti{\tilde}
\def\z{\zeta}
\def\<{{\langle}}
\def\>{{\rangle}}
\def\({{\Big(}}
\def\){{\Big)}}
\def\]{{\Big]}}
\def\[{{\Big[}}
\def\bx{{\mathbf{x}}}
\def\dif{{\mathord{{\rm d}}}}
\def\min{{\mathord{{\rm min}}}}
\def\={&\!\!=\!\!&}
\def\bt{\begin{theorem}}
\def\et{\end{theorem}}
\def\bl{\begin{lemma}}
\def\el{\end{lemma}}
\def\br{\begin{remark}}
\def\er{\end{remark}}
\def\bd{\begin{definition}}
\def\ed{\end{definition}}
\def\bp{\begin{proposition}}
\def\ep{\end{proposition}}
\def\bc{\begin{corollary}}
\def\ec{\end{corollary}}
\def\bx{\begin{Examples}}
\def\ex{\end{Examples}}
\def\cB{{\mathcal B}}
\def\cD{{\mathcal D}}
\def\cK{{\mathcal K}}
\def\cL{{\mathcal L}}
\def\mE{{\mathbb E}}
\def\mN{{\mathbb N}}
\def\mP{{\mathbb P}}
\def\mZ{{\mathbb Z}}
\def\geq{\geqslant}
\def\leq{\leqslant}
\numberwithin{equation}{section}
\title{\bf{Ergodicity of the 2D Navier-Stokes Equations with Degenerate  Multiplicative  Noise$^*$}\thanks{The authors
were Supported by 973 Program, No. 2011CB808000 and
Key Laboratory of Random Complex Structures and Data Science, No.2008DP173182, NSFC, No.:10721101, 11271356, 11371041.}}
\author{   Xuhui Peng  \\
{\em\tiny   College of Mathematics and Computer Science, Hunan Normal University,
 }\\
{\em\tiny  Changsha, {\rm 410081}, P.R.China.}\\
}}
\date{}
\begin{document}
\footnote{ Email addresses:  pengxuhui@amss.ac.cn(Xuhui Peng) }
\footnote{$^*$ The authors
were Supported by 973 Program, No. 2011CB808000 and
Key Laboratory of Random Complex Structures and Data Science, No.2008DP173182, NSFC, No.:10721101, 11271356, 11371041.}

\maketitle

\begin{abstract}
Consider the two-dimensional, incompressible
Navier-Stokes equations on the torus $T^2=[-\pi,\pi]^2$ driven by a
degenerate multiplicative noise
\begin{equation}
\label{1-3}
dw_{t}=  \nu \Delta w_{t}dt+B(\mathcal{K} w_{t},w_{t})dt+Q(w_t)dB_t.
\end{equation}
We use the Malliavin calculus to  prove that the semigroup $\{P_t\}_{t\geq 0}$ generated by the
solutions to $(\ref{1-3})$ is  asymptotically strong Feller.
Moreover, we  use the coupling method to   prove  the  semigroup $\{P_t\}_{t\geq 0}$ is
exponentially ergodic in some sense. Our result is stronger than that in
\cite{O2}.

\vskip0.5cm\noindent{\bf Keywords:} stochastic Navier-Stokes equation;
asymptotically strong Feller property; ergodicity. \vspace{1mm}\\
\noindent{{\bf MSC 2000:} 60H15; 60H07}
\end{abstract}

\section{Introduction, Preliminaries and  Properties for  Solution}
\label{}

\subsection{Introduction and Main Results}

This work is motivated by  paper \cite{martin}, in which, Martin
Hairer and Jonathan C. Mattingly considered  the following
two-dimensional, incompressible Navier-Stokes equations on the torus
$T^2=[-\pi,\pi]^2$ driven by  a additive degenerate noise
\begin{equation}
\label{1-1}
\dif  w_{t}=\nu \Delta w_{t} \dif t+B(\mathcal{K} w_{t},w_{t})dt+Q\dif  B(t).
\end{equation}
With the  asymptotically strong Feller property that they
discovered, they proved the uniqueness and existence of the
invariant measure for the semigroup generated by the solution to
$(\ref{1-1})$.  Also, in \cite{Hairer02} they proved the solution has a spectral gap in Wasserstein distance.  In this article,  we consider the same questions for  the   following
two-dimensional, incompressible stochastic  Navier-Stokes equations with degenerate  multiplicative  noise
\begin{eqnarray}
\label{1-2} \dif  w_{t}=\nu \Delta w_{t}\dif t+B(\mathcal{K} w_{t},w_{t})\dif t+Q(w_t)\dif  B(t).
\end{eqnarray}

We assume $B_t$ is a cylindrical Wiener process of a Hilbert space $U$, that is  there  exist
 an orthonormal basis  $(\beta_n)$ of $U$, and  a family $(B_n)$ of independent brownian
motions such that
\begin{eqnarray*}
  B_t=\sum_{n=1}^{\infty}B_n(t)\beta_n.
\end{eqnarray*}
Denote $H = L^2_0$, the space of real-valued square-integrable functions on the torus with vanishing mean and $\|\cdot\|$ denotes the $L^2$-norm on the Hilbert space $H.$
 We make  the following Hypotheses  in this article on $Q$:

\begin{description}
  \item[\textbf{H1}]  The function $Q:H\rightarrow \cL_2(U;H)$  is bounded.
  Assume
 \begin{align*}
   B_0=\sup_{u\in H}\|Q(u)\|^2_{\cL_2(U;H)}<\infty,
 \end{align*}
   \item[\textbf{H2}] The function $Q:H\rightarrow \cL_2(U;H)$  is   Lipschitz and denote
  $L_Q$ be  the Lipschitz constant of Q.

  \item[\textbf{H3}]   There exist $N\in \mN^*$ and a bounded measurable map $g : H\rightarrow L(H;U)$ such that
for any $u\in H$
$$
Q(u)g(u) = P_N,
$$
\end{description}
here the meaning of $P_N$ can see subsection 1.2. In the following,
we set
 \begin{align*}
   \|Q(u)\|=\|Q(u)\|_{\cL_2(U;H)}.
 \end{align*}
Denote by
$a\vee b:=\max\{a,b\},
a\wedge  b:=\min\{a,b\}.$ Let $C(d)$ be some positive constant depending on $d,B_0,L_Q$ and $\nu,$ and let  $C$ be some positive constant depending on $B_0,L_Q$ and $\nu.$ The constant $C$ or $C(d)$ may changes from line to line.

Define $V_\eta(x)=e^{\eta \|x\|^2}, \forall \eta>0$, and for any  $r\in (0,1]$ and $\eta>0,$ we introduce a family of distances $\rho_r^{\eta}$ on H
\begin{eqnarray*}
  \rho_r^\eta(x,y):=\inf_{\gamma}\int_0^1 V_\eta^r(\gamma(t))\|\dot{\gamma}(t) \|\dif t,
\end{eqnarray*}
where the infimum runs over all paths $\gamma$ such that $\gamma(0)=x$ and $\gamma(1)=y.$  For simplify of writing, we denote $ \rho_r(x,y):=\rho_r^\eta(x,y).$
Let  $P_t$  be  the transition semigroups of
${w_t}$, that is
\begin{align*}
P_tf(w_0):=\mathbb{E}_{w_0}f(w_t),~~f\in\mathscr{B}_b(H).
\end{align*}
Our main results in this article are  the following three  Theorems.
\begin{thm}[Weak Form of Irreducibility]\label{2015-7}
 There exists some  $N_0$ such that
if    \textbf{H1,H2} hold and  \textbf{H3} holds  $N\geq N_0$, then the solutions to   (\ref{1-2}) have the weak form of irreducibility. That is given any $\eta\in (0,\frac{\nu}{8B_0})$, $C>0,$ $r\in (0,1)$ and $\delta>0$, there exists a $T_0$ such that for any $T\geq T_0$ there exists  an $a>0$ such that
\begin{eqnarray*}
  \inf_{\|x\|,\|y\|\leq C}\sup_{\pi\in \Gamma(P_T^*\delta_{x}, P_T^*\delta_{y})}\pi\left\{(x',y')\in H\times H, \rho_r(x',y') \leq \delta\right\}>a.
\end{eqnarray*}
\end{thm}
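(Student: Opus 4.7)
The plan is to use the independent (product) coupling $\pi := (P_T^*\delta_x) \otimes (P_T^*\delta_y) \in \Gamma(P_T^*\delta_x, P_T^*\delta_y)$, which reduces the problem to a uniform small-ball estimate about the origin. Specifically, I would exhibit $\delta' > 0$ (depending only on $r$, $\eta$, $\delta$), $T_0 > 0$, and $a > 0$ such that for all $T \geq T_0$,
\[
\inf_{\|x\|\leq C} P_T(x, B_{\delta'}) \geq \sqrt{a}, \qquad B_{\delta'} := \{z \in H : \|z\| \leq \delta'\}.
\]
Then for $\|x\|, \|y\| \leq C$ the independent coupling immediately gives $\pi\{\|x'\| \leq \delta',\, \|y'\|\leq \delta'\} \geq a$, and the metric comparison below shows $\rho_r(x',y') \leq \delta$ on this event.

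For the metric comparison, take the straight-line path $\gamma(t) := (1-t) x' + t y'$, which satisfies $\|\gamma(t)\| \leq \delta'$ and $\|\dot\gamma(t)\| = \|y'-x'\| \leq 2\delta'$, giving
\[
\rho_r(x',y') \leq \int_0^1 V_\eta^r(\gamma(t))\,\|\dot\gamma(t)\|\,dt \leq 2\delta'\, e^{r\eta (\delta')^2},
\]
which is $<\delta$ once $\delta'$ is chosen small enough (depending only on $r, \eta, \delta$).

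The small-ball estimate I would build in two stages separated by the Markov property. In the first stage, It\^o's formula applied to $\|w_t\|^2$, using the cancellation $\langle B(\mathcal{K}w,w), w\rangle = 0$, the Poincar\'e inequality $\|\nabla w\|^2 \geq \|w\|^2$, and the bound $\|Q(w)\|_{\cL_2}^2 \leq B_0$ from \textbf{H1}, yields
\[
\mathbb{E}\|w_t\|^2 \leq e^{-2\nu t}\|x\|^2 + \frac{B_0}{2\nu};
\]
Chebyshev then produces a time $T_1 = T_1(C)$ and a radius $R_0$, both independent of $\|x\| \leq C$, with $P_{T_1}(x, B_{R_0}) \geq 3/4$, reducing the problem by the Markov property to finding $S$ with $\inf_{\|y\|\leq R_0} P_S(y, B_{\delta'}) \geq \tilde a > 0$, after which $T_0 := T_1 + S$ works. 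In the second stage, for $\|y\| \leq R_0$ I would compare with the deterministic Navier--Stokes solution $v_t$ started at $y$: the same energy argument without noise gives $\|v_t\| \leq e^{-\nu t}\|y\|$, so $\|v_S\| \leq \delta'/2$ for $S = S(R_0, \delta')$ large enough. A Gronwall argument on $w_t - v_t$, using \textbf{H2} on the diffusion and standard bilinear estimates on $B(\mathcal{K}\cdot,\cdot)$, bounds $\sup_{t \leq S}\|w_t - v_t\|$ by the supremum over $[0,S]$ of the stochastic convolution $Z_t := \int_0^t e^{(t-s)\nu\Delta} Q(w_s)\,dB_s$.

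The main obstacle is the uniform small-ball estimate for $Z$ in the multiplicative, cylindrical setting, where $Q(w_s)$ is random and state-dependent and so the usual Gaussian small-ball estimates do not apply verbatim. I would circumvent this by a Girsanov / Stroock--Varadhan support-theorem argument, shifting the cylindrical Wiener measure so that the equation becomes a small-noise perturbation of the deterministic Navier--Stokes flow; the uniform bound $\|Q(u)\|_{\cL_2}^2 \leq B_0$ from \textbf{H1} keeps the resulting Radon--Nikodym derivative integrable and yields the required positive lower bound, uniform in $\|y\| \leq R_0$.
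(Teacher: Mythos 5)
Your reduction to a uniform small-ball estimate via the product coupling is clean, and your first stage (a moment bound plus Chebyshev to enter a fixed ball $B_{R_0}$, then the Markov property) is unobjectionable; the straight-line metric comparison is also correct. But the whole weight of the argument rests on the step you yourself flag as the ``main obstacle'', and the fix you propose does not work. Girsanov's theorem changes the drift of the equation by $Q(w_t)\phi_t\,dt$; it cannot reduce the diffusion coefficient, so no change of measure turns (\ref{1-2}) into a ``small-noise perturbation of the deterministic Navier--Stokes flow''. What you actually need is a lower bound, uniform over $\|y\|\leq R_0$, on the probability that the multiplicative stochastic convolution $\int_0^t e^{(t-s)\nu\Delta}Q(w_s)\,dB_s$ stays small on $[0,S]$. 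This is not a Gaussian small-ball statement (the integrand is random and adapted); a Stroock--Varadhan support theorem would at best give pointwise positivity for each fixed $y$, and since $B_{R_0}$ is not compact in $H$ you cannot upgrade pointwise positivity to a uniform bound by lower semicontinuity. Worse, under \textbf{H1}--\textbf{H3} the noise may act only on the modes $|k|\leq N$ and in a state-dependent way, so driving the solution into an arbitrarily small ball around the origin with probability bounded below is a genuinely stronger (and here unproven) statement than the theorem requires.

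The paper avoids this entirely: it never tries to send the solution to zero. Instead it introduces the auxiliary equation (\ref{p-2}) with the binding feedback term $KP_N(\tilde w - w)$, which by \textbf{H3} can be rewritten as the original equation driven by the shifted noise $B+\int_0^\cdot h_s\,ds$ with $h_s=-Kg(\tilde w_s)P_N(\tilde w_s-w_s)$; a Foias--Prodi type estimate (Proposition \ref{p-3}) shows $\tilde w\to w$ exponentially fast once $N$ is large, and Odasso's maximal-coupling lemma together with a Girsanov bound on the Radon--Nikodym derivative (Lemmas \ref{2015-3} and \ref{2015-4}) shows that with probability bounded below the shifted process coincides with a genuine solution started at $w_0^2$. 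There Girsanov is used only to compare the laws of two processes with the same diffusion coefficient and different drifts, which is exactly what it can do. If you want to keep your overall structure, you would have to replace your second stage by such a binding/coupling construction; as written, the proposal has a gap at its central step.
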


\begin{thm}[Gradient Estimate]
\label{1-17}
There exists $\eta_0>0,$ such that for any $\eta\leq \eta_0$,
there exist some constant  $N_0:=N_0(\eta)>0$,  such that if
Hypotheses  \textbf{H1},\textbf{H2} hold and    \textbf{H3} holds with   $N\geq N_0$, then
for any $f$
  \begin{eqnarray*}
 |\nabla P_t f (w_0)| &\leq& C(N) \exp{\(\big(\frac{4\eta}{\nu} +\eta \big)\|w_0\|^2\)}\sqrt{P_t|\varphi|^2(w_0)}+
Ce^{\frac{4\eta}{\nu}\|w_0\|^2}e^{-\nu N^2 t} \sqrt{P_t\|D\varphi\|^2(x)}.
  \end{eqnarray*}
\end{thm}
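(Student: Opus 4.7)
The plan is to adapt the Hairer--Mattingly Malliavin calculus approach \cite{martin} to the multiplicative setting. First, I would express the directional derivative via the Jacobian: for any $\xi\in H$,
\begin{equation*}
\langle \xi,\nabla P_tf(w_0)\rangle=\mE\langle\nabla f(w_t),J_{0,t}\xi\rangle,
\end{equation*}
where $J_{0,t}$ is the derivative flow associated to (\ref{1-2}). For any $U$-valued adapted control $v\in L^2([0,t];U)$, the Malliavin integration by parts formula gives
\begin{equation*}
\mE\langle\nabla f(w_t),\sA_t^v\rangle=\mE\Big[f(w_t)\int_0^t\langle v_s,dB_s\rangle\Big],
\end{equation*}
where $\sA_t^v:=\int_0^tJ_{s,t}Q(w_s)v_s\,ds$ is the Malliavin derivative of $w_t$ in the direction $v$. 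Setting $\rho_t:=J_{0,t}\xi-\sA_t^v$ and applying Cauchy--Schwarz yields
\begin{equation*}
|\langle\xi,\nabla P_tf(w_0)\rangle|\leq\sqrt{P_t\|\nabla f\|^2(w_0)}\,\sqrt{\mE\|\rho_t\|^2}+\sqrt{P_tf^2(w_0)}\,\Big(\mE\Big|\int_0^t\langle v_s,dB_s\rangle\Big|^2\Big)^{1/2},
\end{equation*}
so the task reduces to choosing $v$ so that $\mE\|\rho_t\|^2$ decays like $e^{-2\nu N^2 t}$ while keeping $\mE\int_0^t\|v_s\|_U^2\,ds$ finite with the correct exponential dependence on $\|w_0\|$.

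The key is to exploit Hypothesis \textbf{H3}. Since $Q(u)g(u)=P_N$, the ansatz $v_s=g(w_s)P_N\Phi_s$ produces $Q(w_s)v_s=P_N\Phi_s$, so the control acts freely on the $N$-dimensional low-mode subspace. The residual equation reads
\begin{equation*}
d\rho_s=\bigl[\nu\Delta+\tilde B(w_s)\bigr]\rho_s\,ds+DQ(w_s)\rho_s\,dB_s-Q(w_s)v_s\,ds,
\end{equation*}
and I would choose $\Phi_s$ as a feedback canceling $P_N$ times the drift of $\rho_s$, so that $P_N\rho_s$ is driven identically to zero. The surviving component $Q_N\rho_t$ then satisfies the linearized equation restricted to high frequencies, where the spectral gap $\nu N^2$ of $-\nu\Delta$ on $Q_NH$ yields the exponential decay
\begin{equation*}
\mE\|\rho_t\|^2\leq C\,e^{8\eta\|w_0\|^2/\nu}\,e^{-2\nu N^2 t}\|\xi\|^2,
\end{equation*}
which, after taking square roots, reproduces the second term in the stated bound.

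For the first term I would estimate $\mE\int_0^t\|v_s\|_U^2\,ds$. Boundedness of $g$ restricted to the range of $P_N$ (with norm growing in $N$, giving rise to the constant $C(N)$) and the feedback construction reduce the estimate to an exponentially weighted moment of the form $\mE\int_0^t\|\rho_s\|^2e^{c\|w_s\|^2}ds$. Combining the standard exponential moment bound $\mE V_\eta(w_s)\leq CV_\eta(w_0)$ for $\eta\leq \nu/(8B_0)$, valid under \textbf{H1}, with an $L^2$-estimate for $J_{0,s}\xi$ in the weight $e^{c\|w_s\|^2}$ (obtained by applying It\^o's formula to $e^{c\|w_s\|^2}\|J_{0,s}\xi\|^2$ and using the enstrophy dissipation of 2D Navier--Stokes) gives
\begin{equation*}
\Big(\mE\int_0^t\|v_s\|_U^2\,ds\Big)^{1/2}\leq C(N)\exp\Big(\big(\tfrac{4\eta}{\nu}+\eta\big)\|w_0\|^2\Big)\|\xi\|,
\end{equation*}
matching exactly the first term in the conclusion.

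The main obstacle will be the extra terms produced by the multiplicative structure of $Q$. The It\^o correction $\tfrac12\|DQ(w_s)\|_\HS^2\|J_{0,s}\xi\|^2$ appearing in the moment estimates of the Jacobian must be absorbed by the viscous dissipation; this is precisely what forces the smallness of $\eta_0$ and the condition $N\geq N_0(\eta)$. Moreover, the equation for $\rho$ carries a stochastic integral $DQ(w_s)\rho_s\,dB_s$ absent in the additive setting of \cite{martin}, so the high-mode decay has to be established via an exponentially weighted martingale argument rather than a deterministic one. Once $\eta_0$ is chosen small enough that the combined growth from \textbf{H1} and \textbf{H2} remains subcritical with respect to $\nu\|\nabla w\|^2$, the low/high-mode splitting and the two-piece bound mirror the additive argument and deliver the asserted estimate.
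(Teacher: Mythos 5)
Your overall framework -- Malliavin integration by parts, the residual $\rho_t=J_{0,t}\xi-A_{0,t}v$, a low/high-mode splitting exploiting \textbf{H3}, and exponentially weighted martingale estimates forcing $\eta$ small and $N\geq N_0(\eta)$ -- is the same as the paper's. The genuine gap is in your design of the control on the low modes. You propose to choose $\Phi_s$ ``canceling $P_N$ times the drift of $\rho_s$, so that $P_N\rho_s$ is driven identically to zero.'' In the multiplicative setting this cannot work: the low-mode component satisfies
\begin{equation*}
\dif (P_N\rho_s) = P_N\bigl[\nu\Delta\rho_s+\tilde B(w_s,\rho_s)\bigr]\dif s - P_N\Phi_s\,\dif s + P_N DQ(w_s)\rho_s\,\dif B_s,
\end{equation*}
and a drift control can remove neither the martingale term $P_N DQ(w_s)\rho_s\,\dif B_s$ nor the nonzero initial condition $P_N\xi$. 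If you merely cancel the drift, $P_N\rho_s$ becomes a martingale started at $P_N\xi$, so $\mE\|P_N\rho_s\|^2\geq\|P_N\xi\|^2$ for all $s$ and the claimed decay $\mE\|\rho_t\|^2\leq C e^{8\eta\|w_0\|^2/\nu}e^{-2\nu N^2t}\|\xi\|^2$ fails outright; with it fails the second term of the theorem.

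The paper avoids exactly this obstruction: instead of cancellation it introduces the auxiliary process $\zeta_t$ of (\ref{1-18}), which carries a large artificial damping $B_1\Delta\zeta_t^l$ with $B_1\geq\nu N^2$ on the low modes, and chooses $v_t=g(w_t)F_t$ with $F_t=\pi_l\tilde B(w_t,\zeta_t)-B_1\Delta\zeta_t^l+\nu\Delta\zeta_t^l$ precisely so that $\rho_t$ and $\zeta_t$ solve the same equation, hence coincide. A single It\^o estimate on $\|\zeta_t\|^2$ (Lemmas \ref{dp-2} and \ref{2-3}), in which the It\^o correction $L_Q\|\zeta_t\|^2$ coming from $DQ(w_t)\zeta_t\,\dif B_t$ and the cross terms $\eta\|w_t\|_1^2\|\zeta_t\|^2$ are absorbed by taking $N\geq N_0(\eta)$, gives $\mE\bigl[\|\zeta_t\|^2e^{\nu N^2t-4\eta\int_0^t\|w_r\|_1^2\dif r}\bigr]\leq1$, and Lemma \ref{10} converts this into the required decay of $\mE\|\rho_t\|^2$. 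To repair your argument you must replace ``cancel the drift'' by ``add a damping of order $\nu N^2$ to the low modes'' and then run the weighted estimate on all of $\rho$ at once, which is what $\zeta_t$ accomplishes; your acknowledged ``weighted martingale argument'' for the high modes is in fact needed for the low modes too. A secondary slip: your bound on $\mE\int_0^t\|v_s\|_U^2\,\dif s$ is phrased through moments of $J_{0,s}\xi$, but $v$ is a feedback of the residual, so the relevant quantities are $\mE\|\zeta_s\|^2$ and $\mE[\|w_s\|^2\|\zeta_s\|^2]$ as in the paper's Subsection 3.2.
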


We in introduce the following family of norms
\begin{eqnarray*}
  \|\phi\|_{V_\eta^r}=\sup_{x\in H}\frac{|\phi(x)|+\|D\phi(x)\|}{V_\eta^r(x)}
\end{eqnarray*}
When we take $r=1,$  we simply write $\|\phi\|_{V_\eta}$.
\begin{thm}[Exponential Mixing]\label{4-1}
For any $\eta\in (0,\frac{\nu}{16B_0})$,
there exists some  $N_0:=N_0(\eta)$ such that
if    \textbf{H1,H2} hold and  \textbf{H3} holds  $N\geq N_0$,
then there exists a unique invariant probability  measure $\mu^*$ for $P_t$ and   positive  constants $\theta$ and $C$  such that for any  $\phi\in \cB,$
\begin{eqnarray}\label{1-16}
   \|P_t\phi-\mu^* \phi\|_{V_\eta} &\leq &  Ce^{-\theta t} \|\phi-\mu^*\phi\|_{V_\eta}, \ \ \forall t>0.
\end{eqnarray}
\end{thm}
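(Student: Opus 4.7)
The strategy combines Theorem~\ref{2015-7}, Theorem~\ref{1-17}, and an exponential Lyapunov drift through the contractive-distance machinery of \cite{Hairer02}. To start, I would establish the Lyapunov estimate $\mathbb{E}_{w_0} V_\eta(w_t) \leq \kappa_t V_\eta(w_0)+K_t$ for $\eta<\nu/(16B_0)$ and $\kappa_t<1$ for large $t$. It\^o's formula applied to $e^{\eta\|w\|^2}$, the cancellation $\langle w, B(\mathcal{K} w,w)\rangle=0$, the Poincar\'e inequality $\|\nabla w\|^2\geq\|w\|^2$ on $L^2_0(T^2)$, and the bound $\|Q(w)\|^2\leq B_0$ from \textbf{H1} together furnish the drift once $\eta$ is small enough to absorb the martingale quadratic-variation contribution $2\eta^2\|Q^*w\|^2$.

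Next, I would convert Theorem~\ref{1-17} into a local Lipschitz bound in the distance $\rho_r$. For $\phi\in\cB$, using $|\phi|,\|D\phi\|\leq \|\phi\|_{V_\eta}V_\eta$ and the Lyapunov bound just established (at a slightly larger exponent) to control $\sqrt{P_t|\phi|^2}$ and $\sqrt{P_t\|D\phi\|^2}$ by $C V_\eta^r$, Theorem~\ref{1-17} yields
\begin{equation*}
|\nabla P_t\phi(x)|\leq \bigl(C_t(N)+C_t e^{-\nu N^2 t}\bigr)V_\eta^r(x)\|\phi\|_{V_\eta}.
\end{equation*}
Integration along a near-optimal path for $\rho_r$ then gives $|P_t\phi(x)-P_t\phi(y)|\leq \bigl(C'+C e^{-\nu N^2 t}\bigr)\|\phi\|_{V_\eta}\,\rho_r(x,y)$ on every sublevel set of $V_\eta$.

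I would then assemble the three ingredients into a contraction. Following \cite{Hairer02}, I would introduce the weighted distance
\begin{equation*}
\tilde d_\beta(x,y)=\bigl(\rho_r(x,y)\wedge 1\bigr)^{1/2}\bigl(1+\beta V_\eta(x)+\beta V_\eta(y)\bigr),\quad \beta>0,
\end{equation*}
and verify that $P_{T_0}$ is a strict contraction in the Kantorovich--Wasserstein distance associated to $\tilde d_\beta$. The Lyapunov drift controls the $V_\eta$ factor, the local Lipschitz bound makes $\rho_r$ shrink on the ``close'' regime, and Theorem~\ref{2015-7} ensures that from any sublevel set of $V_\eta$ a positive fraction of the mass lands in that regime at time $T_0$. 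Iterating gives existence and uniqueness of $\mu^*$, and \eqref{1-16} follows because the Lipschitz seminorm dual to $\tilde d_\beta$ dominates $\|\cdot\|_{V_\eta}$.

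The main obstacle is the calibration of constants: the prefactor $e^{(4\eta/\nu+\eta)\|w_0\|^2}$ in Theorem~\ref{1-17} exceeds $V_\eta^r$ for any $r\leq 1$, so Lipschitz continuity in $\rho_r$ cannot be obtained termwise. One must absorb it via a Lyapunov bound at an exponent strictly larger than $\eta$, which is precisely what forces the stringent range $\eta<\nu/(16B_0)$ stated in Theorem~\ref{4-1}. Once $\eta$, $r<1$, $N\geq N_0(\eta)$, $\beta$ and $T_0$ are chosen compatibly, the contractive iteration closes and delivers both uniqueness of $\mu^*$ and the exponential bound \eqref{1-16}.
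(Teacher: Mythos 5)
Your overall architecture coincides with the paper's: the paper feeds three ingredients (a Lyapunov structure, the gradient estimate of Theorem \ref{1-17}, and the irreducibility of Theorem \ref{2015-7}) into the abstract spectral-gap theorem of \cite{Hairer02}, quoted as Theorem \ref{5-9} with Assumptions \ref{5-1}, \ref{5-5}, \ref{5-7}, while you propose to re-run the weighted-Wasserstein contraction by hand. That difference is largely cosmetic. There is, however, one concrete gap in what you supply as the Lyapunov ingredient.

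The input required by this machinery is not the plain drift $\mathbb{E}_{w_0}V_\eta(w_t)\le \kappa_t V_\eta(w_0)+K_t$ that you establish, but the Jacobian-weighted estimate of Assumption \ref{5-1}(3),
\begin{equation*}
\mathbb{E}\Big[V^{r}(\Phi_t(x))\big(1+\|D\Phi_t(x)h\|\big)\Big]\le C\,V^{r\zeta(t)}(x),\qquad \|h\|=1,\ t\in[0,1],
\end{equation*}
and this is where the paper's Proposition 4.1 spends essentially all of its effort: it writes the equation $(\ref{5-11})$ for $\xi_t=J_{0,t}\xi$, derives $\mathbb{E}\big[\|\xi_t\|^2\exp(-h(\eta)t-\eta\int_0^t\|w_s\|_1^2\,ds)\big]\le 1$, and combines this with the exponential bound of Lemma \ref{5-10} to control $\mathbb{E}\big[e^{\eta\|w_t\|^2}\|\xi_t\|\big]$. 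This estimate cannot be bypassed: $\rho_r$ measures path length weighted by $V^{r}$, so to show that two nearby trajectories remain $\rho_r$-close in expectation (the ``close regime'' of your contraction; Lemma 3.1 of \cite{Hairer02}) one must control $\mathbb{E}\big[V^{r}(\Phi_t(\gamma(s)))\|D\Phi_t(\gamma(s))\dot\gamma(s)\|\big]$ along a path, and neither the plain drift nor Theorem \ref{1-17} (which bounds the different object $\nabla P_t\varphi$) supplies this. Relatedly, a function that is $1$-Lipschitz for your $\tilde d_\beta=(\rho_r\wedge1)^{1/2}(1+\beta V_\eta(x)+\beta V_\eta(y))$ is only $\tfrac{1}{2}$-H\"older with respect to $\rho_r$, so the gradient estimate does not apply to it termwise; \cite{Hairer02} resolves this by first proving that $P_t$ maps $\rho_r$-Lipschitz functions to $\rho_{r\zeta(t)}$-Lipschitz functions --- again using the Jacobian-weighted bound --- before passing to the square-root metric, and a further argument (their Theorem 4.5, using the gradient estimate once more) is needed to upgrade the Wasserstein contraction to the norm $\|\cdot\|_{V_\eta}$, which also weights $\|D\phi\|$. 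Supply the estimate on $\mathbb{E}\big[e^{\eta\|w_t\|^2}\|J_{0,t}\xi\|\big]$ and your argument closes; without it the contraction in the close regime is unproved.
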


\begin{Rem}
 Odasso \cite[Theorem 3.4]{O2} used  coupling method to  establish exponential mixing of the solutions of
stochastic Navier-Stokes equation under hypotheses  \textbf{H1,H2} and  \textbf{H3}, but  he didn't give the proof of  asymptotically strong Feller property and the  weak form of irreducibility.
The results of   theorem \ref{4-1}  is stronger  than  that in  \cite{O2}.
\end{Rem}

\subsection{Preliminaries}\
\label{2014-2}
Recall that the Navier-Stokes  equations are given by
 \begin{eqnarray*}
 \partial_t u+(u\cdot \nabla) u=\nu \triangle  u-\nabla p+\xi,\ \ \text{div}~u=0.
 \end{eqnarray*}
where $\xi(x,t) $ is the external force field acting  on the fluid.

 The vorticity  $w$ is defined by $w=\nabla \wedge u=\partial_2 u_1-\partial_1 u_2.$
$B(u,w)=-(u\cdot \nabla )w .$ For $k=(k_1,k_2)\in \mathbb{Z}^2 \setminus  \{(0,0)\},\ k^{\perp
}=(k_2,-k_1), ~w_k
=\langle w ,(2 \pi)^{-1}e^{ik\cdot x}\rangle_{H}.$ The
operator $\mathcal{K}$ is defined in Fourier space by
$$
(\mathcal{K}w)_k=\langle \mathcal{K}w, (2
\pi)^{-1}e^{ik\cdot x } \rangle_H=-iw_k k^{\perp }/||k||^2.
$$
We write $\mathbb{Z}^2\setminus \{(0,0)\}=\mathbb{Z}^2_{+}\cup \mathbb{Z}^2_{{-}}$, where
\begin{eqnarray*}
&&\mathbb{Z}_{+}^2=\left\{(k_1,k_2)\in \mathbb{Z}^2:\  k_2>0\} \cup \{ (k_1,0)\in \mathbb{Z}^2:\
k_1>0\right\},
\\&&\mathbb{Z}_{-}^2=\left\{(k_1,k_2)\in \mathbb{Z}^2:\  -(k_1,k_2) \in \mathbb{Z}_{+}^2 \right\},
\end{eqnarray*}
and set,  for $k\in  \mathbb{Z}^2 {  \setminus } \{(0,0)\}$,
\begin{eqnarray*}
e_k(x)=\left\{ \begin{matrix}\text{sin}(k\cdot x) & k \in \mathbb{Z}_{+}^2,\\
\text{cos} (k \cdot x)&  k \in \mathbb{Z}_{-}^2.\end{matrix} \right.
\end{eqnarray*}
Then,  $\{e_k, k\in \mZ^2\setminus\{(0,0)\}$ an  orthonormal basis  of $H$.
For $\alpha \in \mathbb{R} $ and   a smooth function $w$ on $[-\pi,\pi]^2$
with mean 0, denote $\|w\|_{\alpha}$ by
\begin{eqnarray*}
\|w\|_{\alpha}^2=\sum_{k \in \mathbb{Z}^2 \backslash
\{(0,0)\}}|k|^{2\alpha}|w_k|^2,\ \ w_k=\langle w,(2
\pi)^{-1}e^{ik\cdot x}\rangle_{H},
\end{eqnarray*}
and $\|w\|:=\|w\|_0$.   Denote  $H^{2\alpha}$ be the closure of smooth function   with respect to the norm $\|\cdot \|_{\alpha}$ in H.  We denote  $P_N$ and $Q_N$ the orthogonal projection in $H$ onto the space Span$\{e_k, 1\leq |k|\leq N\}$ and onto its complementary.  Denote $H_N=P_NH$.

Actually,    $(\ref{1-2})$  defines a stochastic flow on $H$. That means a family of continuous map
$\Phi_t:W \times H\rightarrow H$ such that $w_t=\Phi_t(B,w_0)$ is the solution
to $(\ref{1-2})$  with initial condition $w_0$ and noise $B.$

Given a  $v \in
L^2_{loc}(\mathbb{R}^+,U)$, the Malliavin derivative of
the $H$-valued random variable $w_t$ in the direction $v$, denoted by
$\mathcal{D}^vw_t$ is defined by
\begin{eqnarray*}
\mathcal{D}^{v} w_t=\lim_{\varepsilon \rightarrow 0}
\frac{\Phi_t(B+\varepsilon V,w_0)- \Phi_t(B,w_0) }{\varepsilon},
\end{eqnarray*}
where the limit holds almost surely with respect to Wiener measure
and $V(t)=\int_0^t v(s)ds$.
Let $\{J_{s,t}\}_{s\leq t}$  be the derivative flow between times s
and t, i.e for every $\xi \in H,J_{s,t}\xi$ is the solution of
\begin{eqnarray}
\label{0-1} \begin{cases}dJ_{s,t}\xi =\nu  \triangle J_{s,t}\xi dt +\tilde{B}(w_t,J_{s,t}\xi )dt
+DQ(w_t)J_{s,t}\xi dB_t,\\
J_{s,s}\xi=\xi,
\end{cases}
\end{eqnarray}
where $ \tilde{B}(w, u)=B(\mathcal{K}w,u)+B(\mathcal{K}u,w).$
 $J_{0,t}\xi$ is the effect on
$w_t$ of an infinitesimal perturbation of the initial condition in
the direction $\xi$. $DQ$ is Fr\'{e}chet derivation of $Q.$
Observe that $\mathcal{D}^{v} w_t=A_{0,t}v$, where
$A_{s,t}:L^2([s,t],U)\rightarrow H$
\begin{eqnarray}
\label{0-2}
A_{s,t}v=\int_s^tJ_{r,t}Q(w_r)v(r)dr.
\end{eqnarray}

If $B(u,v)=(u \cdot \nabla) v,\mathcal{S}=\left\{ (s_1,s_2,s_3)
\in\mathbb{R}_{+}^3:\sum s_i \geq 1,s \neq(1,0,0),(0,1,0),(0,0,1)
\right\} $.   Then the following relations are useful. Its  proof
can be seen  in \cite{P}  or \cite{martin} .
\begin{eqnarray}
\label{1-5} &\langle B(u,v),w\rangle=-\langle B(u,w),v\rangle,   \ \
&\text{if } \ \nabla \cdot  u=0,
\\ \label{1-6}& |\langle B(u,v),w\rangle| \leq C\|u\|_{s_1}\|v\|_{1+s_2}\|w\|_{s_3},&(s_1,s_2,s_3) \in \mathcal{S},
\\ \label{1-7} &\| \mathcal{K}w\|_{\alpha}=\|w\|_{\alpha-1},&
\\ \label{1-8} &\|w\|_{\frac{1}{2}}^2 \leq \|w\|_1 \|w\|.&
\end{eqnarray}
\subsection{Properties  For Solution}
In this subsection, we will give some Lemmas and Propositions  which will be used  in section 2 and section 3.

\begin{lemma}(\cite[Lemma A.1]{mat02})
\label{2-1} Let $M(s)$~be a continuous martingale with quadratic
variation~$[M,M](s)$~such that~$\mathbb{E}[M,M]< \infty$. Define the
semi-martingale  $N(s)=-\frac{\alpha}{2}[M,M](s)+M(s)$ for any
$\alpha>0$. If  $\gamma \geq 0,$ then for any $\beta > 0$ and $T
>\frac{1}{\beta}$
\begin{eqnarray*}
\mathbb{P}\left\{ \sup_{t \in [T-\frac{1}{\beta},T]}\int_0^t e^{-\gamma(t-s)} dN(s)   >\frac{e^{\frac{\gamma}{\beta}}}{\alpha} K  \right\}<e^{-K}.
\end{eqnarray*}
Specially,
\begin{eqnarray*}
\mathbb{P}\left\{\sup_{t}N(t) > \frac{1}{\alpha}K\right\}<e^{-K}.
\end{eqnarray*}
\end{lemma}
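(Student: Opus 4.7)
The engine of the proof is the classical exponential supermartingale: for any continuous martingale $X$ and any real $\lambda$, the process $\exp(\lambda X(t)-\frac{\lambda^{2}}{2}[X,X](t))$ is a non-negative local martingale, and the finiteness hypothesis $\mathbb{E}[M,M]<\infty$ together with a routine localisation upgrades it to a supermartingale of initial expectation $1$. Doob's maximal inequality applied to non-negative supermartingales will then do all the work. In particular, for the ``specially'' clause one notes that $\exp(\alpha N(t))=\exp(\alpha M(t)-\frac{\alpha^{2}}{2}[M,M](t))$ is already such a supermartingale, so
$$
\mathbb{P}\Bigl\{\sup_{t} N(t)>K/\alpha\Bigr\}=\mathbb{P}\Bigl\{\sup_{t} e^{\alpha N(t)}>e^{K}\Bigr\}\leq e^{-K}.
$$

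For the main (weighted) assertion I would use the rewriting $e^{-\gamma(t-s)}=e^{\gamma s}/e^{\gamma t}$ and introduce
$$
L(t):=\int_{0}^{t} e^{\gamma s}\,dN(s)=\widehat{M}(t)-\frac{\alpha}{2}\int_{0}^{t} e^{\gamma s}\,d[M,M](s),\qquad \widehat{M}(t):=\int_{0}^{t} e^{\gamma s}\,dM(s),
$$
so that $\int_{0}^{t} e^{-\gamma(t-s)}\,dN(s)=e^{-\gamma t}L(t)$, and $\widehat{M}$ is a continuous martingale with $[\widehat{M},\widehat{M}](t)=\int_{0}^{t} e^{2\gamma s}\,d[M,M](s)$. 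The key move is to take $\lambda:=\alpha e^{-\gamma T}$; then $\lambda e^{\gamma s}\leq\alpha$ for every $s\leq T$, which gives pointwise $\frac{\lambda^{2}}{2}e^{2\gamma s}\leq\frac{\lambda\alpha}{2}e^{\gamma s}$ and therefore
$$
\lambda L(t)\leq \lambda\widehat{M}(t)-\frac{\lambda^{2}}{2}[\widehat{M},\widehat{M}](t)\qquad \text{for all } t\in[0,T].
$$
Consequently $\exp(\lambda L(t))$ is dominated by a non-negative supermartingale of initial expectation $1$.

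To convert this back to the target quantity, observe that for $t\in[T-1/\beta,T]$,
$$
e^{-\gamma t}L(t)=\frac{e^{\gamma(T-t)}}{\alpha}\cdot \lambda L(t)\leq \frac{e^{\gamma/\beta}}{\alpha}\cdot \lambda L(t)\quad \text{whenever } \lambda L(t)\geq 0,
$$
while if $\lambda L(t)<0$ then $e^{-\gamma t}L(t)<0$ is automatically dominated by $e^{\gamma/\beta}K/\alpha$ for any $K\geq 0$. Hence the event $\{\sup_{t\in[T-1/\beta,T]}e^{-\gamma t}L(t)>e^{\gamma/\beta}K/\alpha\}$ is contained in $\{\sup_{t\in[0,T]}\lambda L(t)>K\}$, and one last application of Doob's maximal inequality to the dominating supermartingale yields the claimed bound $e^{-K}$. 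The only real obstacle is bookkeeping: calibrating $\lambda=\alpha e^{-\gamma T}$ so that the supermartingale domination holds uniformly in $s\in[0,T]$, and verifying that the prefactor $e^{\gamma/\beta}$ is precisely the price paid for restricting the supremum to the terminal interval of length $1/\beta$.
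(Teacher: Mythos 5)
The paper does not prove this lemma at all: it is quoted verbatim from \cite[Lemma A.1]{mat02}, so there is no in-paper argument to compare against. Your proof is correct and is essentially the standard exponential-supermartingale argument used in that reference: the calibration $\lambda=\alpha e^{-\gamma T}$ makes $e^{\lambda L(t)}$ dominated by the stochastic exponential of $\lambda\widehat M$ on $[0,T]$, and the factor $e^{\gamma/\beta}$ correctly absorbs the mismatch $e^{\gamma(T-t)}\leq e^{\gamma/\beta}$ on the terminal window. The only blemish is cosmetic: Doob's maximal inequality for non-negative supermartingales yields $\mathbb{P}\{\sup Z\geq e^{K}\}\leq e^{-K}$, so the bound you obtain is $\leq e^{-K}$ rather than the strict $<e^{-K}$ displayed in the statement; this is inherited from the source and is immaterial for every application in the paper.
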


\begin{lemma}
\label{16} Assume \textbf{H1}. For every  $\eta \leq \frac{\nu}{2B_0}$, there
exists  a  constant $C=C(\nu,B_0)$ such that
\begin{equation*}
\mathbb{E}\exp{\{\sup_{t\geq 0}(\eta   \Arrowvert  w_{t} \Arrowvert^2)\}} \leq C e^{\eta \Arrowvert w_{0}\Arrowvert^2},
\end{equation*}
and
\begin{eqnarray*}
  \mE\[e^{\eta \|w_t\|^2}\]\leq Ce^{\eta e^{-\nu t}\|w_0\|^2}.
\end{eqnarray*}

\end{lemma}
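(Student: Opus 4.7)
The plan is to apply It\^o's formula to $\|w_t\|^2$, exploit the antisymmetry $\langle B(\mathcal{K}w_t,w_t),w_t\rangle=0$ (which follows from (\ref{1-5}) since $\nabla\cdot\mathcal{K}w_t=0$), the Poincar\'e inequality $\|w\|_1^2\geq\|w\|^2$ on $H=L^2_0$, and Hypothesis \textbf{H1}. This yields the single It\^o inequality
\[d\|w_t\|^2+2\nu\|w_t\|_1^2\,dt\leq B_0\,dt+dM_t,\]
where $M_t=2\int_0^t\langle w_s,Q(w_s)\,dB_s\rangle$ is a local martingale with $d[M]_t\leq 4B_0\|w_t\|^2\,dt$ (using $\|Q(w_s)^\ast w_s\|^2\leq\|Q(w_s)\|^2\|w_s\|^2\leq B_0\|w_s\|^2$). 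Both claimed estimates are derived from this single relation.

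For the marginal bound I introduce the time-dependent weight $\eta(t):=\eta e^{-\nu(T-t)}$ on $[0,T]$, so that $\eta(0)=\eta e^{-\nu T}$, $\eta(T)=\eta$, and $\eta'(t)=\nu\eta(t)$. Applying It\^o's formula to $\exp(\eta(t)\|w_t\|^2)$, the coefficient of $\|w_t\|^2$ in the drift becomes
\[\eta'(t)-2\nu\eta(t)+2B_0\eta(t)^2=-\nu\eta(t)+2B_0\eta(t)^2\leq 0\]
under the standing hypothesis $\eta\leq\nu/(2B_0)$, so the drift reduces to the bounded term $\eta(t)B_0\exp(\eta(t)\|w_t\|^2)\,dt$ plus a martingale. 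Taking expectation and applying Gronwall with $\int_0^T\eta(s)\,ds\leq\eta/\nu$ produces $\mathbb{E}e^{\eta\|w_T\|^2}\leq e^{B_0\eta/\nu}\,e^{\eta e^{-\nu T}\|w_0\|^2}$, which is the second assertion.

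For the supremum bound I multiply the basic inequality by the integrating factor $e^{2\nu t}$ and use $\|w\|_1^2\geq\|w\|^2$ to obtain $d(e^{2\nu t}\|w_t\|^2)\leq B_0 e^{2\nu t}dt+e^{2\nu t}dM_t$, whence
\[\|w_t\|^2\leq e^{-2\nu t}\|w_0\|^2+\tfrac{B_0}{2\nu}+e^{-2\nu t}\!\int_0^t e^{2\nu s}\,dM_s.\]
Applying Lemma~\ref{2-1} with $\gamma=2\nu$ and a small parameter $\alpha>0$ to the martingale $M$ yields an exponential tail for
\[\sup_{t\geq 0}\Bigl[\int_0^t e^{-2\nu(t-s)}\,dM_s-\tfrac{\alpha}{2}\int_0^t e^{-2\nu(t-s)}\,d[M]_s\Bigr].\]
Substituting $d[M]_s\leq 4B_0\|w_s\|^2\,ds$ and the pointwise bound above into this inequality closes a self-referential estimate on $\sup_{s\leq t}\|w_s\|^2$, yielding $\sup_{t\geq 0}\eta\|w_t\|^2\leq c_1\eta\|w_0\|^2+c_2+c_3 K$ with probability $\geq 1-e^{-K}$; integrating this tail and using $\eta\leq\nu/(2B_0)$ gives the first claim.

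The main obstacle is precisely this self-referential feedback, specific to the multiplicative case: because $d[M]_t\leq 4B_0\|w_t\|^2\,dt$, the quadratic variation controlling the noise scales with the very quantity one is trying to bound. The resolution is to choose $\alpha$ in Lemma~\ref{2-1} small enough that the $\alpha$-weighted integral of $\|w_s\|^2$ is strictly dominated by the a priori $e^{-2\nu t}$-decay, so the feedback coefficient is less than one and the estimate closes.
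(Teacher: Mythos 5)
Your proposal is correct, and it starts from the same It\^o inequality and leans on the same key tool (Lemma \ref{2-1}) as the paper, but the execution of both halves is genuinely different. For the fixed-$t$ bound you run a separate exponential-supermartingale argument with the time-dependent weight $\eta(t)=\eta e^{-\nu(T-t)}$; the paper instead extracts both assertions from the single pathwise estimate (\ref{2-12}), so your route is an extra (clean and correct) computation that the paper does not need. The more substantive divergence is in the supremum bound: you spend the entire dissipation $-2\nu\|w_t\|_1^2\leq -2\nu\|w_t\|^2$ on producing the decay factor $e^{-2\nu t}$, which forces you to recover the compensator for $[M,M]$ from the decay itself and hence to close a self-referential inequality on $\sup_{s\leq t}\|w_s\|^2$. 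The paper avoids this entirely by splitting $-2\nu\|w_t\|^2=-\nu\|w_t\|^2-\nu\|w_t\|^2$ after (\ref{2-2}): one half gives the weight $e^{-\nu(t-s)}$, and the other half is kept explicitly as the term $-\eta\nu\int_0^te^{-\nu(t-s)}\|w_s\|^2\,ds$, which (since $d[M,M]_s\leq 4\eta^2B_0\|w_s\|^2\,ds$ and $\eta\leq\nu/(2B_0)$) is exactly the compensator $-\frac{\alpha}{2}[M,M]$ demanded by Lemma \ref{2-1}; the tail bound then falls out in one step. Your bootstrap does close --- taking the supremum over $[0,t]$ is legitimate because $\sup_{s\leq t}\|w_s\|^2<\infty$ a.s.\ by path continuity, and the feedback coefficient $\alpha B_0/\nu$ can be made less than one --- but it costs additional constants in the admissible range of $\eta$ and an extra fixed-point step, so the paper's splitting is the more economical argument. (Both you and the paper gloss over the same minor point, namely that Lemma \ref{2-1} controls the weighted integral only on windows $[T-\frac1\beta,T]$, so passing to $\sup_{t\geq0}$ strictly speaking needs a union over windows.)
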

\begin{proof}
By  It\^{o}'s  formula,
\begin{equation}
\label{2-2}
d  \eta \Arrowvert  w_{t} \Arrowvert^2+2\eta \nu \| w_{t} \Arrowvert^2_1dt
=2\eta \langle w_t,Q(w_t)dB_t\rangle +\eta \| Q(w_t) \|^2dt.
\end{equation}
Using the fact that $\|w_t\Arrowvert\leq \|w_t
\Arrowvert_1 $ and  $\|Q(w_t)\|^2\leq B_0,$
\begin{equation*}
d\eta \| w_{t} \Arrowvert^2+\nu \eta \| w_{t} \Arrowvert^2dt\leq 2\eta  \langle w_t,,Q(w_t)dB_t\rangle+\eta B_0 dt-\nu \eta \| w_{t} \Arrowvert^2dt,
\end{equation*}
that is,
\begin{equation*}
\eta d(\| w_{t} \Arrowvert^2 e^{\nu t})\leq 2\eta e^{\nu t} \langle w_t,Q(w_t)dB_t\rangle+  \eta B_0  e^{\nu t}dt-\nu \eta e^{\nu t} \| w_{t} \Arrowvert^2 dt.
\end{equation*}
So,
\begin{equation*}
\eta \| w_{t} \Arrowvert^2- \eta e^{-\nu t} \| w_0 \Arrowvert^2
 - \frac{\eta B_0 }{\nu}\leq 2\eta \int_0^t  e^{-\nu(t-s)}\langle w_s,Q(w_s)dB_s\rangle-\eta \nu \int_0^t  e^{-\nu(t-s)}  \| w_{s} \Arrowvert^2 ds.
\end{equation*}
By   Lemma $\ref{2-1},$   when  $\eta \leq  \frac{\nu}{2B_0},$
one arrives  at
\begin{equation}\label{2-12}
\mathbb{E}\exp{\left\{\sup_{t\geq 0}\left(\eta   \| w_{t} \Arrowvert^2 -\eta e^{-\nu t}  \| w_0 \Arrowvert^2- \frac{\eta B_0}{\nu}\right)\right\}}\leq 2,
\end{equation}
here we use the fact that if a random variable $X$ satisfies  $\mathbb{P}(X\geq C)\leq \frac{1}{C^2}$ for all $C\geq 0$, then $\mathbb{E}X\leq 2.$
Then  this lemma   follows by $(\ref{2-12})$.
\end{proof}

\begin{lemma}
\label{10} Assume  \textbf{H1}.
 For every  $\eta \leq \frac{\nu}{2B_0}$, there
exists  a  abosolute constant $C$ such that
\begin{equation*}
\mathbb{E}\exp{\(   \eta \sup_{t\geq 0}\big( \| w_{t} \Arrowvert^2
+\nu \int_0^t \| w_r \Arrowvert^2_1dr-B_0 t\big)    \)} \leq  C  \exp{(\eta \| w_0 \Arrowvert^2)}.
\end{equation*}
\end{lemma}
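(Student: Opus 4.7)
The plan is to run the same It\^o-plus-exponential-supermartingale argument as for Lemma \ref{16}, but this time calibrating the parameters so that the dissipation $\nu\int_0^t\|w_r\|_1^2\,dr$ remains visible inside the supremum, rather than being absorbed into a Gr\"onwall exponent as in the preceding proof. Integrating (\ref{2-2}) and using $\|Q(w_r)\|^2\leq B_0$ from \textbf{H1} gives
\begin{equation*}
\|w_t\|^2+\nu\int_0^t\|w_r\|_1^2\,dr-B_0 t\leq \|w_0\|^2+M_t-\nu\int_0^t\|w_r\|_1^2\,dr,
\end{equation*}
where $M_t:=2\int_0^t\langle w_r,Q(w_r)\,dB_r\rangle$ is a continuous martingale; taking the supremum over $t\geq 0$ on both sides reduces the lemma to bounding $\mathbb{E}\exp\bigl(\eta\sup_{t\geq 0}(M_t-\nu\int_0^t\|w_r\|_1^2\,dr)\bigr)$.

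For this I would estimate the quadratic variation
\begin{equation*}
[M,M]_t=4\int_0^t\|Q(w_r)^{*}w_r\|_U^2\,dr\leq 4B_0\int_0^t\|w_r\|^2\,dr\leq 4B_0\int_0^t\|w_r\|_1^2\,dr,
\end{equation*}
the last step being the Poincar\'e inequality for mean-zero functions on $T^2$ (the first non-zero eigenvalue of $-\Delta$ on the torus is $1$). Setting $\alpha:=\nu/(2B_0)$ and $N_t:=M_t-\tfrac{\alpha}{2}[M,M]_t$, one then has $\tfrac{\alpha}{2}[M,M]_t\leq\nu\int_0^t\|w_r\|_1^2\,dr$, hence $M_t-\nu\int_0^t\|w_r\|_1^2\,dr\leq N_t$ pointwise in $t$. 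Lemma \ref{2-1} applied to $N_t$ with $\gamma=0$ and $\beta\to\infty$ then gives the tail bound $\mathbb{P}(\sup_{t\geq 0}N_t>L)\leq e^{-\alpha L}$ for all $L>0$, and a direct tail-to-moment computation yields $\mathbb{E}\exp(\eta\sup_{t\geq 0}N_t)\leq C$ as long as $\eta<\alpha$; combining this with the display above and multiplying by $\exp(\eta\|w_0\|^2)$ produces the announced estimate.

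The only technical wrinkle I anticipate is the endpoint $\eta=\nu/(2B_0)=\alpha$, where the exponential tail $e^{-\alpha L}$ is precisely at the borderline of integrability against $e^{\eta L}$ and the naive tail-to-moment integral $\int_0^\infty e^{(\eta-\alpha)L}\,dL$ diverges. This is handled by taking $\eta$ strictly below the threshold (so that the ``absolute constant'' $C$ really depends on the ratio $\alpha/\eta>1$) or by a short localisation/limiting argument at the endpoint; in either case the main content, namely the bookkeeping that matches $\alpha$ exactly with the coefficient of $\int_0^t\|w_r\|_1^2\,dr$ in the effective drift, is identical to what is done in Lemma \ref{16}.
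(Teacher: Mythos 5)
Your proposal is correct and follows essentially the same route as the paper: integrate the It\^o identity (\ref{2-2}), bound $\|Q(w_r)\|^2\leq B_0$, recognize the remaining expression as $M_t-\frac{\alpha}{2}[M,M]_t$ with $\alpha$ matched to the dissipative term, and invoke the exponential supermartingale bound of Lemma \ref{2-1} (the paper keeps the factor $\eta$ inside the martingale and absorbs only $\nu\int_0^t\|w_r\|^2\,dr$ via $\|w\|\leq\|w\|_1$, but this is cosmetic). The endpoint issue you flag at $\eta=\nu/(2B_0)$ is genuine, and is present (and silently glossed over) in the paper's own one-line conclusion as well.
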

\begin{proof}
Combining   $(\ref{2-2})$ with   $\|w_t\Arrowvert\leq
\|w_t \Arrowvert_1 $,
\begin{equation*}
\eta \| w_t \Arrowvert^2 +\eta \nu \int_0^t \| w_r \Arrowvert^2_1dr-\eta \int_0^t\| Q(w_r) \Arrowvert^2 dr
-\eta \| w_0 \Arrowvert^2\leq 2 \eta \int_0^t\langle w_r,Q(w_r)dB_r\rangle-\eta \nu \int_0^t \| w_r \Arrowvert^2dr.
\end{equation*}
Therefore
\begin{equation*}
\eta \| w_t \Arrowvert^2 +\eta \nu \int_0^t \| w_r \Arrowvert^2_1dr
-\eta B_0 t-\eta \| w_0 \Arrowvert^2\leq 2 \eta \int_0^t\langle w_r,Q(w_r)dB_r\rangle-\eta \nu \int_0^t \| w_r \Arrowvert^2dr.
\end{equation*}
Due to  Lemma  $\ref{2-1}$, when    $\eta \leq \frac{\nu}{2B_0}$, for some absolutely constant $C,$
\begin{align*}
\mathbb{E}\exp{ \Big(   \eta \sup_{t\geq 0} \big(  \| w_{t} \Arrowvert^2
+\nu \int_0^t \| w_r \Arrowvert^2_1 dr-B_0 t
- \| w_{0} \Arrowvert^2  \big)    \Big) } \leq C,
\end{align*}
from which this  lemma follows.
\end{proof}

\section{Proof of Weak Form of Irreducibility}
Let
\begin{eqnarray*}
  F(w)=\nu \Delta w+B(\cK w, w).
\end{eqnarray*}
and $w=w(t,B,w_0^1)$ be the solution to  following equation
\begin{eqnarray}
\label{p-1}
\left\{
\begin{split}
 & \dif  w_{t}=\nu \Delta w_{t}\dif t+B(\mathcal{K} w_{t},w_{t})\dif t+Q(w_t)\dif  B(t),
 \\ &  w(0)=w_0^1.
\end{split}
\right.
\end{eqnarray}
Let $\tilde{w}=\tilde{w}(t,B,w_0^2)$ be the solution to the following equation,
\begin{eqnarray}
\label{p-2}
\left\{
\begin{split}
  & \dif \tilde{w}=F(\tilde{w})\dif t+KP_N(\tilde{w}-w(t,B,w_0^1))\dif t+ Q(\tilde{w}_t)\dif B_t,
  \\  & \ti{w}(0)=w_0^2.
  \end{split}
  \right.
\end{eqnarray}
Therefore
\begin{eqnarray*}
  \tilde{w}(t,B,w_0^2)=w(t,B+\int_0^\cdot h_s\dif s,w_0^2),
\end{eqnarray*}
here $h:H\times H\rightarrow U$ is given by
\begin{eqnarray*}
 h_s:=h(s,B):=-K g(\tilde{w}_s)P_N(\ti{w}_s-w(s,B,w_0^1)).
\end{eqnarray*}
Denote
\begin{eqnarray*}
\rho_r'(x,y)=\int_0^1 e^{r\eta \|ty+(1-t)x\|^2}\|x-y\|\dif t.
\end{eqnarray*}

\begin{lemma}
  There exists $C_1$  and $\kappa>0,$ such that
  \begin{eqnarray*}
    \mE(|w(t,B,w_0)|^2)\leq e^{-\kappa t}|w_0|^2+C_1
  \end{eqnarray*}
\end{lemma}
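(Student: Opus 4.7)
The plan is to derive the estimate by applying It\^o's formula to $\|w_t\|^2$, using Poincar\'e's inequality (together with the zero-mean constraint), and closing with Gr\"onwall on the resulting deterministic ODE inequality after taking expectations.

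First I would apply It\^o's formula to the equation \eqref{p-1} to obtain
\begin{equation*}
\dif \|w_t\|^2 + 2\nu \|w_t\|_1^2 \dif t = 2\langle w_t, Q(w_t)\dif B_t\rangle + \|Q(w_t)\|^2_{\cL_2(U;H)}\dif t.
\end{equation*}
Since $H=L^2_0$ (mean-zero functions on the torus) we have the Poincar\'e-type bound $\|w_t\|^2\leq \|w_t\|_1^2$, and by Hypothesis \textbf{H1} we have $\|Q(w_t)\|^2\leq B_0$. Substituting these,
\begin{equation*}
\dif \|w_t\|^2 + 2\nu \|w_t\|^2 \dif t \leq 2\langle w_t, Q(w_t)\dif B_t\rangle + B_0 \dif t.
\end{equation*}

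Next I would take expectations. The exponential moment bound in Lemma \ref{16} (applied with a small $\eta$) guarantees that $\mE\int_0^t \|w_s\|^2\|Q(w_s)\|^2\dif s < \infty$, so the stochastic integral is a genuine martingale and drops out. This gives the scalar ODE inequality
\begin{equation*}
\frac{\dif}{\dif t}\mE\|w_t\|^2 + 2\nu \,\mE\|w_t\|^2 \leq B_0.
\end{equation*}
A standard Gr\"onwall argument (multiplying by $e^{2\nu t}$ and integrating from $0$ to $t$) then yields
\begin{equation*}
\mE\|w_t\|^2 \leq e^{-2\nu t}\|w_0\|^2 + \frac{B_0}{2\nu}\bigl(1-e^{-2\nu t}\bigr) \leq e^{-2\nu t}\|w_0\|^2 + \frac{B_0}{2\nu},
\end{equation*}
so the claim holds with $\kappa = 2\nu$ and $C_1 = B_0/(2\nu)$.

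There is no real obstacle here; this is a routine energy estimate. The only mild subtlety is the verification that the It\^o stochastic integral has zero expectation, which requires some integrability of $\|w_t\|$. This is already handled upstream by Lemma \ref{16}, which provides even an exponential moment. One could alternatively stop at $\tau_M = \inf\{t : \|w_t\|\geq M\}$, apply the estimate on $[0,t\wedge \tau_M]$, and send $M\to\infty$ via Fatou's lemma to avoid any appeal to the exponential bound.
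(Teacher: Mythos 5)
Your proof is correct: the paper actually states this lemma without providing any proof, and your argument is exactly the standard energy estimate one would expect, starting from the same It\^o identity $(\ref{2-2})$ (with $\eta=1$) that the paper already uses in Lemmas \ref{16} and \ref{10}, followed by the Poincar\'e bound $\|w_t\|\leq\|w_t\|_1$, taking expectations, and Gr\"onwall. The constants $\kappa=2\nu$ and $C_1=B_0/(2\nu)$ are fine, and your remark on justifying that the stochastic integral is a true martingale (via Lemma \ref{16} or a stopping-time/Fatou argument) covers the only delicate point.
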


\begin{lemma}\label{2015-5}
There exists $C>0$ such that for any $w_0^1,w_0^2$ in H satisfying
\begin{eqnarray}\label{p-4}
  \|w_0^1\|^2+\|w_0^2\|^2\leq 2C_1
\end{eqnarray}
and  there exists $\gamma_1,\gamma_2>0,$  such that  for any $t\geq 0$, we have
\begin{eqnarray*}
  && \mP\(\rho_r'(w(t,B_2,w_0^2), w(t,B_1,w_0^1))  \geq Ce^{-\gamma_1 t},
  \\ &&  \ \ \ \ \ \  \tilde{w}(\cdot,B_1,w_0^2)=w(\cdot,B_2,w_0^2) \text{ on } [0,t]\)\leq Ce^{-\gamma_2 t}.
\end{eqnarray*}
\end{lemma}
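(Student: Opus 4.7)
I would work on the event $\mathcal{E}_t:=\{\tilde{w}(\cdot,B_1,w_0^2)=w(\cdot,B_2,w_0^2)\text{ on }[0,t]\}$, where by the Girsanov identity stated just above the lemma, $\tilde{w}$ driven by $B_1$ satisfies the coupling equation (2.2). Set $\rho_s:=\tilde{w}_s-w(s,B_1,w_0^1)$. The decomposition $B(\mathcal{K}\tilde{w},\tilde{w})-B(\mathcal{K}w,w)=B(\mathcal{K}\rho,\tilde{w})+B(\mathcal{K}w,\rho)$ combined with the antisymmetry $\langle B(\mathcal{K}w,\rho),\rho\rangle=0$ from (1.5), plus It\^o's formula applied to $\|\rho_s\|^2$, gives (after choosing the sign of $K$ so that the feedback is dissipative on $H_N$)
\begin{equation*}
d\|\rho_s\|^2\leq\bigl\{-2\nu\|\rho_s\|_1^2+2|\langle\rho_s,B(\mathcal{K}\rho_s,\tilde{w}_s)\rangle|-2|K|\|P_N\rho_s\|^2+L_Q^2\|\rho_s\|^2\bigr\}ds+dM_s.
\end{equation*}
The bilinear term I would bound by Ladyzhenskaya together with (1.6)--(1.8): $|\langle\rho,B(\mathcal{K}\rho,\tilde{w})\rangle|\leq C\|\rho\|\|\rho\|_1\|\tilde{w}\|^{1/2}\|\tilde{w}\|_1^{1/2}\leq\tfrac{\nu}{2}\|\rho\|_1^2+C\|\tilde{w}\|\|\tilde{w}\|_1\|\rho\|^2$, and the diffusion term by \textbf{H2}. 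Using $\|\rho\|_1^2\geq\|P_N\rho\|^2+(N+1)^2\|Q_N\rho\|^2$ to trade dissipation against $\|\rho\|^2$ on each spectral block, I obtain, for $N$ and $|K|$ large enough,
\begin{equation*}
d\|\rho_s\|^2\leq\bigl(-\alpha(N)+L_Q^2+C\|\tilde{w}_s\|\|\tilde{w}_s\|_1\bigr)\|\rho_s\|^2\,ds+dM_s,\qquad\alpha(N)\to\infty.
\end{equation*}

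\textbf{From $\|\rho\|$ to $\rho_r'$.} An It\^o--Gronwall argument (or equivalently, writing the corresponding stochastic exponential and invoking Lemma \ref{2-1} to absorb the martingale tail) yields
\begin{equation*}
\|\rho_t\|^2\mathbf{1}_{\mathcal{E}_t}\leq\|\rho_0\|^2\exp\!\Bigl(-(\alpha(N)-L_Q^2)t+C\!\int_0^t\!\|\tilde{w}_s\|\|\tilde{w}_s\|_1\,ds+N_t\Bigr),
\end{equation*}
where $N_t$ is a martingale with Gaussian-type tails controlled via Lemma \ref{2-1}. The elementary inequality $\rho_r'(w_t,\tilde{w}_t)\leq e^{r\eta(\|w_t\|^2\vee\|\tilde{w}_t\|^2)}\|\rho_t\|$ (a consequence of convexity of $\|\cdot\|^2$) reduces the claim to controlling three exponential factors in parallel: (i) the decay factor above; (ii) the weights $e^{r\eta\|w_t\|^2}$ and $e^{r\eta\|\tilde{w}_t\|^2}$, handled by Lemma \ref{16}; and (iii) $\exp(C\!\int_0^t\!\|\tilde{w}_s\|\|\tilde{w}_s\|_1\,ds)$, which I would control by Cauchy--Schwarz in time followed by Lemma \ref{10}. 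A union bound via Markov's inequality applied to each factor then produces an event of probability $\geq 1-Ce^{-\gamma_2 t}$ on which $\rho_r'(w_t,\tilde{w}_t)\leq Ce^{-\gamma_1 t}$, with both $\gamma_1,\gamma_2>0$ as soon as $\alpha(N)$ beats the combined exponential rates.

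\textbf{Main obstacle.} The delicate point is the budget: $\alpha(N)$ must dominate the cumulative exponential growth coming from $L_Q^2$, the linear-in-$t$ rate generated by $C\int_0^t\|\tilde{w}_s\|\|\tilde{w}_s\|_1\,ds$ (via Lemma \ref{10}), the weight rate $r\eta$ acting on the second moments of $w,\tilde{w}$ (via Lemma \ref{16}), and the corrections from the sub-Gaussian tail of $N_t$ and from inverting Markov's inequality. This is exactly what forces the hypothesis $N\geq N_0$. A secondary subtlety is that $\tilde{w}$ solves the modified equation (2.2) and not (1.2), so Lemmas \ref{16} and \ref{10} must be re-derived for it; this is painless because the sign of $K$ chosen to give coupling contraction also makes $-KP_N\tilde{w}$ contribute non-negatively to dissipation in the $\|\tilde{w}\|^2$ It\^o computation, the cross term $\langle \tilde{w},KP_Nw\rangle$ being absorbed by Young against the known exponential moments of $\|w\|$.
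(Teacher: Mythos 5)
Your proposal is correct and follows essentially the same route as the paper: on the coupling event one replaces $w(t,B_2,w_0^2)$ by $\tilde w(t,B_1,w_0^2)$, proves exponential decay of $\|\tilde w-w\|^2$ by It\^o's formula using the $P_N$-feedback, the spectral gap on the high modes and the cancellation of one bilinear term by $(\ref{1-5})$, bounds $\rho_r'$ by the exponential weights times $\|\tilde w-w\|$, and concludes with Markov's inequality together with the exponential moment bounds of Lemmas \ref{10} and \ref{16} (the paper packages the decay step as Proposition \ref{p-3} and finishes with a single Chebyshev--Cauchy--Schwarz rather than your pathwise Gronwall and union bound, but these rest on the same estimates). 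Your observations that the sign of $K$ must be taken so the feedback is dissipative and that the moment lemmas must be re-derived for $\tilde w$, which solves the modified equation $(\ref{p-2})$, correctly flag points the paper passes over in silence.
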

\begin{proof}
Denote $x=\tilde{w}(t,B_1,w_0^2)$ and $y=w(t,B_1,w_0^1))$
\begin{eqnarray*}
&& \mP\( \rho_r'(w(t,B_2,w_0^2), w(t,B_1,w_0^1))  \geq Ce^{-\gamma_1 t},
  \\ &&  \ \ \ \ \ \  \tilde{w}(\cdot,B_1,w_0^2)=w(\cdot,B_2,w_0^2) \text{ on } [0,t]  \)
  \\ &&\leq  \mP\( \rho_r'(\tilde{w}(t,B_1,w_0^2), w(t,B_1,w_0^1))  \geq Ce^{-\gamma_1 t}\)
  \\ &&\leq \mP\(e^{2r\eta \|x\|^2+{2r\eta \|y\|^2 }}\|x-y\|\geq Ce^{-\gamma_1 t} \)
  \\ &&\leq \frac{1}{C}e^{\gamma_1 t} \mE\[\|x-y\|^2\]^{1/2}\mE\[ e^{4r\eta \|x\|^2+{4r\eta \|y\|^2}}\]^{1/2},
\end{eqnarray*}
and therefore this Lemma follows by Lemma \ref{10} and Lemma \ref{p-3}.
\end{proof}

\begin{lemma}\label{2015-3}
  There exists $p_1>0$ such that for any $w_0^1,w_0^2\in H$ satisfying (\ref{p-4}), we have \
  \begin{eqnarray*}
    \mP\(\int_0^{\infty}|h(t,B)|^2\dif t\leq C \)\geq p_1.
  \end{eqnarray*}

 \end{lemma}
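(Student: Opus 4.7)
The plan is to reduce the integral bound to exponential decay of $\|\rho_t\|^2 := \|\tilde w_t - w_t\|^2$ on an event of positive probability. Because $g$ is bounded by Hypothesis \textbf{H3} and $K$ is fixed, $|h(s,B)|_U \le C\|P_N\rho_s\| \le C\|\rho_s\|$ pointwise, so it suffices to show $\int_0^\infty \|\rho_s\|^2\dif s \le C$ on an event of probability at least $p_1$.

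I would subtract (\ref{p-1}) from (\ref{p-2}) and apply It\^o to $\|\rho_t\|^2$, using: the cancellation (\ref{1-5}) to kill $\langle\rho_t, B(\cK\tilde w_t,\rho_t)\rangle$; the trilinear bound (\ref{1-6}) at $(s_1,s_2,s_3)=(1,0,1)$ combined with (\ref{1-7}) to obtain $|\langle\rho_t, B(\cK\rho_t,w_t)\rangle| \le C\|\rho_t\|\|\rho_t\|_1\|w_t\|_1$; and Hypothesis \textbf{H2} to bound the diffusion correction by $L_Q^2\|\rho_t\|^2$. With $K$ chosen so that $2K\|P_N\rho_t\|^2$ stabilises the low modes and with Young's inequality absorbing half of the dissipation, the identity $\|\rho\|_1^2 \ge \|P_N\rho\|^2 + cN^2\|Q_N\rho\|^2$ merges the dissipative and coupling contributions into $-\alpha_N\|\rho_t\|^2$ with $\alpha_N\to\infty$ as $N\to\infty$, yielding
\begin{equation*}
\dif\|\rho_t\|^2 \le \bigl[-\alpha_N + C_2\|w_t\|_1^2 + L_Q^2\bigr]\|\rho_t\|^2\dif t + \dif M_t,\qquad \dif[M]_t \le 4L_Q^2\|\rho_t\|^4\dif t.
\end{equation*}
Applying It\^o to $\log\|\rho_t\|^2$ (the zero crossings of $\rho_t$ being harmless: by pathwise uniqueness, once $\rho=0$ it stays at $0$ and the conclusion is trivial), the correction $-\tfrac12\|\rho_t\|^{-4}\dif[M]_t$ removes $2L_Q^2\dif t$ from the drift, so
\begin{equation*}
\dif\log\|\rho_t\|^2 \le \bigl[-\alpha_N + C_2\|w_t\|_1^2 - L_Q^2\bigr]\dif t + \dif N_t,
\end{equation*}
where $N_t$ is a continuous local martingale satisfying $\dif[N]_t \le 4L_Q^2\dif t$.

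Now fix small $\alpha>0$ and large $K_1,K_2>0$ and define
\begin{equation*}
\Omega_0 := \Bigl\{\sup_{t\ge 0}(N_t - 2\alpha L_Q^2 t) \le K_1\Bigr\}\cap\Bigl\{\sup_{t\ge 0}\bigl(\nu\textstyle\int_0^t\|w_r\|_1^2\dif r - B_0 t - \|w_0^1\|^2\bigr) \le K_2\Bigr\}.
\end{equation*}
By Lemma \ref{2-1} applied to $N_t$ (using $[N]_t\le 4L_Q^2 t$), the first factor has probability at least $1-e^{-\alpha K_1}$; by Lemma \ref{10} the second factor has probability bounded below uniformly over $\|w_0^1\|^2\le 2C_1$. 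Choosing $K_1,K_2$ large therefore gives $\mP(\Omega_0)\ge p_1>0$. On $\Omega_0$, integrating the log-estimate yields
$\log\|\rho_t\|^2 \le \log\|\rho_0\|^2 + (-\alpha_N + C_2 B_0/\nu - L_Q^2 + 2\alpha L_Q^2)\,t + \mathrm{const}$;
taking $N$ large and $\alpha$ small so that the coefficient of $t$ is strictly negative, $\|\rho_t\|^2\le Ce^{-\beta t}$ for some $\beta>0$, and hence $\int_0^\infty|h_s|^2\dif s \le C$ on $\Omega_0$. The hardest step is the threshold choice of $N$: it must be large enough (uniformly in the initial-data bound $2C_1$) for $\alpha_N$ to beat the natural enstrophy scale $C_2 B_0/\nu$ produced by $w$; once this is secured, the rest is a routine Gronwall plus exponential-martingale argument, with the log-It\^o step at zero crossings handled by the uniqueness comment above.
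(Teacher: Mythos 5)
Your argument is correct in substance, but it takes a genuinely different and considerably heavier route than the one the paper has prepared. The paper states this lemma without writing a proof; the tool it supplies for it is Proposition \ref{p-3} of the Appendix, which already gives $\mE\Arrowvert r_t\Arrowvert^2\leq 2e^{\gamma_1\Arrowvert w_0^1\Arrowvert^2}\Arrowvert r(0)\Arrowvert^2e^{-\gamma_2 t}$ for exactly the difference $r_t=\tilde w_t-w_t$ that controls $h$. Since $|h(t,B)|_U\leq K\Arrowvert g\Arrowvert_\infty\Arrowvert P_N r_t\Arrowvert\leq C\Arrowvert r_t\Arrowvert$ and $\Arrowvert r(0)\Arrowvert^2\leq 4C_1$ under (\ref{p-4}), integrating in $t$ gives $\mE\int_0^\infty|h(t,B)|^2\dif t\leq C'$ uniformly over (\ref{p-4}), and Chebyshev yields the lemma with $p_1=1/2$ in two lines. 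You instead reprove the dissipation pathwise: It\^o for $\log\Arrowvert\rho_t\Arrowvert^2$, the exponential-martingale bound of Lemma \ref{2-1}, and Lemma \ref{10} to control $\int_0^t\Arrowvert w_s\Arrowvert_1^2\dif s$ on an event $\Omega_0$. That buys more than the lemma asks for (exponential decay of $\Arrowvert\rho_t\Arrowvert$ pathwise on $\Omega_0$, with $\mP(\Omega_0)$ as close to $1$ as desired), at the price of redoing work the Appendix already contains. Two small repairs if you keep your route: (i) you only know $\dif[M]_t\leq 4L_Q^2\Arrowvert\rho_t\Arrowvert^4\dif t$, so the It\^o correction in $\dif\log\Arrowvert\rho_t\Arrowvert^2$ is merely nonpositive and cannot be claimed to remove $2L_Q^2$ from the drift; the correct bound is $-\alpha_N+C_2\Arrowvert w_t\Arrowvert_1^2+L_Q^2$, which is harmless since $\alpha_N\to\infty$ with $N$; (ii) your $\rho_t$ collides with the $\rho_t$ of Section 3 (the Appendix calls this difference $r$), and the zero-crossing issue, though correctly dispatched by pathwise uniqueness, is cleaner if one works with $\log(\Arrowvert\rho_t\Arrowvert^2+\eps)$ and lets $\eps\to0$, or simply keeps the stochastic-exponential form of the estimate as the Appendix does.
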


Define
\begin{eqnarray*}
  \tau(B)=\inf\left\{t>0, \ \ \int_0^t|h(t,B)|^2\dif t>2C\right\}.
\end{eqnarray*}
Apply \cite[corollary1.5]{O2} to
\begin{eqnarray*}
  \left(B,B, B+\int_0^{\tau(B)\wedge \cdot }h(t,B)\dif t\right)
\end{eqnarray*}
we obtain $B_1,B_2$ cylindrical Wiener processes such that
\begin{eqnarray*}
  \left(B_2, B_1+\int_0^{\tau(B)\wedge \cdot }h(t,B_1)\dif t  \right)
\end{eqnarray*}
is a maximal coupling of $ \left(\cD(B), \cD(B+\int_0^{\tau(B)\wedge \cdot }h(t,B)\dif t)  \right)$
on $[0,\infty)$

\begin{lemma}\label{2015-4}
\begin{eqnarray*}
  \mP\(\ti{w}(\cdot ,B_1,w_0^2)=w(\cdot,B_2,w_0^2)\)\geq \frac{p_1}{4e^{2C}},
\end{eqnarray*}
\end{lemma}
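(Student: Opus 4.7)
The plan is to bound from below the probability of the stronger event $\{\tau(B_1)=\infty\}\cap\{B_2=B_1+\int_0^{\tau(B_1)\wedge\cdot}h(s,B_1)\dif s\}$. On this event the truncated shift $\int_0^{\tau\wedge\cdot}h\dif s$ equals the full shift $\int_0^{\cdot}h(s,B_1)\dif s$, and by the identity $\tilde{w}(\cdot,B_1,w_0^2)=w(\cdot,B_1+\int_0^\cdot h(s,B_1)\dif s,w_0^2)$ recorded just before the lemma, this is in turn equal to $w(\cdot,B_2,w_0^2)$. Set $G:=\{\int_0^\infty|h(s,B_1)|^2\dif s\leq C\}\subseteq\{\tau(B_1)=\infty\}$; since $B_1$ has the same law as $B$, Lemma \ref{2015-3} gives $\mP(G)\geq p_1$.

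Next I would invoke the defining property of the maximal coupling of $\mu:=\cD(B)$ and $\nu:=\cD(B+\int_0^{\tau\wedge\cdot}h\dif s)$: conditional on the path $\Phi(B_1):=B_1+\int_0^{\tau(B_1)\wedge\cdot}h(s,B_1)\dif s$, the conditional probability that $B_2=\Phi(B_1)$ equals $\min\bigl((d\mu/d\nu)(\Phi(B_1)),\,1\bigr)$. Girsanov's theorem, applied to the drift $-h$ stopped at $\tau$, identifies this density as the exponential $Z:=\exp\bigl(-\int_0^\tau h(s,B_1)\dif B_1(s)-\tfrac12\int_0^\tau|h(s,B_1)|^2\dif s\bigr)$, so the target probability is bounded below by $\mE[\mathbf{1}_G\min(Z,1)]$.

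For the quantitative bound, I would factor $Z^{-1}=\exp\bigl(\int_0^\tau|h|^2\dif s\bigr)\cdot\exp\bigl(\int_0^\tau h\dif B_1-\tfrac12\int_0^\tau|h|^2\dif s\bigr)$. On $\{\tau=\infty\}$ the first factor is bounded by $e^{2C}$ by the very definition of $\tau$, while the second is a non-negative exponential martingale of expectation at most $1$ (Novikov is automatic thanks to the stopping), giving $\mE[\mathbf{1}_{\{\tau=\infty\}}Z^{-1}]\leq e^{2C}$. A Markov-type inequality $\mP(G\cap\{Z^{-1}>T\})\leq e^{2C}/T$ with $T$ tuned of order $e^{2C}/p_1$ then isolates a set of probability of order $p_1$ on which $Z$ is at least of order $p_1 e^{-2C}$; on this set $\min(Z,1)$ is controlled from below, and multiplying the two estimates produces the claimed constant $p_1/(4e^{2C})$ up to universal prefactors.

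The principal technical obstacle will be the rigorous identification of the Radon--Nikodym derivative $d\mu/d\nu$ at $\Phi(B_1)$ with the Girsanov exponential $Z$: because $h(s,B_1)$ is a nonlinear functional of the whole past of $B_1$, the map $\Phi$ is a non-anticipative but nonlinear shift, and one must verify that it is an almost-sure measurable bijection by recasting its inverse as the unique solution of an auxiliary SDE driven by $\Phi(B_1)$. Once this is done, the truncation at $\tau$ delivers the Novikov condition for free, and the remainder of the argument reduces to standard exponential-martingale bookkeeping together with Cauchy--Schwarz and Markov inequalities.
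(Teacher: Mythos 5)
Your proposal is correct and rests on the same pillars as the paper's proof: restricting to the good set of probability at least $p_1$ from Lemma \ref{2015-3}, identifying the density of $\cD(B)$ with respect to $\cD(B+\int_0^{\tau\wedge\cdot}h\,\dif s)$ via Girsanov (with Novikov supplied by the truncation at $\tau$, so that $\int_0^\tau|h|^2\dif t\leq 2C$ gives the factor $e^{2C}$), and converting this into a lower bound on the probability that the maximal coupling actually couples. The difference is in the last step: the paper feeds the second-moment bound $\int_A(\dif\Lambda_1/\dif\Lambda_2)^2\dif\Lambda_1\leq e^{2C}$ into Odasso's Lemmas 1.2 and 1.3 as black boxes, whereas you re-derive the coupling inequality by hand from the conditional representation $\mP(B_2=\Phi(B_1)\mid \Phi(B_1))=\min\big((\dif\mu/\dif\nu)(\Phi(B_1)),1\big)$ together with a first-moment bound on $Z^{-1}$ and Markov's inequality; this is more self-contained and makes the mechanism behind the cited lemmas transparent. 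Two remarks. First, your bookkeeping yields a constant of order $p_1^2/(4e^{2C})$ rather than the $p_1/(4e^{2C})$ stated in the lemma; this is harmless for Theorem \ref{2015-7} (only positivity matters), and Odasso's Lemma 1.3 with $p=q=2$ in fact also produces $p_1^2/(4e^{2C})$, so the exponent in the paper's statement appears to be a typo rather than a defect of your argument. Second, you correctly flag the one genuine technical point, namely that the good set $G$ is a priori $\sigma(B_1)$-measurable rather than $\sigma(\Phi(B_1))$-measurable, so that passing from the conditional probability given $\Phi(B_1)$ to the joint probability with $\mathbf{1}_G$ requires the invertibility of the non-anticipative shift $\Phi$ (or the specific structure of the coupling in Odasso's Corollary 1.5); the paper silently absorbs this into the citation of \cite[Lemma 1.2]{O2}, so your treatment is, if anything, more careful on this point.
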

\begin{proof}

  Let us set
  \begin{eqnarray*}
  A&=& \left\{B:   \tau(B)=\infty \right\}
  \\
    \Lambda_1&=&\cD(B),
    \\
    \Lambda_2&=&\cD(B+\int_0^{\tau(B)\wedge \cdot }h(t,B)\dif t),
  \end{eqnarray*}
  Novikov condition is obviously verifiede. So, Girsanov Transform gives
  \begin{eqnarray*}
    \(\frac{\dif \Lambda_1}{\dif \Lambda_2}\)(B)=\exp\(-\int_0^{\tau(B)}h(t,B)\dif B_t-\frac{1}{2}\int_0^{\tau(B)}|h(t,B)|^2\dif t\),
  \end{eqnarray*}
  which yields
  \begin{eqnarray*}
    \int_A  \(\frac{\dif \Lambda_1}{\dif \Lambda_2}\)^2 \dif \Lambda_1\leq \mE\[e^{\int_0^{\tau(B)} |h(t,B) |^2\dif t}\]\leq e^{2C}.
  \end{eqnarray*}
By Lemma \ref{2015-3}
  \begin{eqnarray*}
    \Lambda_1(A)\geq p_1.
  \end{eqnarray*}
  By \cite[Lemma 1.3]{O2}
  \begin{eqnarray*}
    \Lambda_1\wedge \Lambda_2(A)\geq \frac{p_1}{4e^{2C}},
  \end{eqnarray*}
 combine it  with \cite[Lemma 1.2]{O2} yields the result of this Lemma.
\end{proof}

Now we are in the position of the proof of Theorem  \ref{2015-7}.
\begin{proof}
\begin{eqnarray*}
&& \sup_{\pi\in \Gamma(P_T^*\delta_x, P_T^*\delta_y)}\pi\left\{(x,y)\in H\times H, \rho_r(x,y) \leq \delta\right\}
\\ && \geq \mP\(\rho_r'(w(t,B_2,w_0^2),w(t,B_1,w_0^1)) \leq \delta \)
\\
  && \geq \mP\(\rho_r'(w(t,B_2,w_0^2),w(t,B_1,w_0^1)) \leq Ce^{-\gamma_1 t}\)
  \\ && \geq  \mP\(\ti{w}(\cdot ,B_1,w_0^2)=w(\cdot,B_2,w_0^2)\)
  \\ && \ \ \ \  -\mP\(\rho_r'(w(t,B_2,w_0^2)-w(t,B_1,w_0^1))\geq Ce^{-\gamma_1 t},
  \\ &&  \ \ \ \ \ \  \ \ \ \ \   \tilde{w}(\cdot,B_1,w_0^2)=w(\cdot,B_2,w_0^2) \text{ on } [0,t]\)
\end{eqnarray*}
Combing it with Lemma \ref{2015-5} and Lemma \ref{2015-4} yields
\begin{eqnarray*}
  \sup_{\pi\in \Gamma(P_T^*\delta_x, P_T^*\delta_y)}\pi\left\{(x',y')\in H\times H, \rho_r(x',y') \leq \delta\right\} \geq \frac{p_1}{4e^{2C}}- Ce^{-\gamma_2 t},
\end{eqnarray*}
which finishes the proof of Theorem \ref{2015-7}.
\end{proof}

\section{Proof of Gradient Estimate}
For any  $v\in  L^2_{loc}(\mathbb{R}^+,U)$ and $\xi\in H$ with  $\|\xi\|=1$,   denote  $\rho_t=J_{0,t}\xi -\mathcal{D}^{v}w_t=J_{0,t}\xi -A_{0,t}v$.
Then   $\rho_t$ satisfies the following equation
\begin{equation}\label{3-32}
\dif \rho_t=\nu \Delta  \rho_t\dif t+\tilde{B}(w_t, \rho_t)\dif t+DQ(w_t) \rho_t \dif B_t-Q(w_t)v_t\dif t.
\end{equation}
Let  $\zeta_t $ be the  solution to the following equation
\begin{eqnarray}\label{1-18}
\left\{
 \begin{split}
d\zeta_{t}&=DQ(w_t)\zeta_t\dif B_t+B_1\Delta \zeta_t^{l}\dif t
\\ & \ \ \ +\pi_h \tilde{B}(w_t,\zeta_t)\dif t+\nu \Delta \zeta_t^h\dif t,
\\ \zeta_{0}&=\xi,
 \end{split}\right.
 \end{eqnarray}
 here  $B_1$ is a  constant bigger than $\nu N^2$ and
 $\zeta_t^{l}=P_{N}\zeta_t,\  \zeta_t^{h}=Q_{N}\zeta_t, \pi_h \tilde{B}(w_t,\zeta_t)=Q_N \tilde{B}(w_t,\zeta_t).$
We set the infinitesimal perturbation $v$  by
\begin{eqnarray*}
\left\{
\begin{split}
v_t&=g(w_t)F_t,
\\ F_t&=  \pi_{l}\tilde{B}(w_t,\zeta_t)-B_1\Delta \zeta_t^{l}+\nu\Delta \zeta_t^{l},
\end{split}\right.
\end{eqnarray*}
 here $ \pi_{l}\tilde{B}(w_t,\zeta_t)= P_N\tilde{B}(w_t,\zeta_t)$.  By Hypothesis \textbf{ H3}, $g$ depends on $N$. $\zeta_t$ also depends on $N.$

\subsection{The estimate of $\zeta_t$}
\begin{lemma}\label{dp-2}
For any    $\eta\leq \frac{\nu^2}{8B_0} $,
there exist some $N_0:=N_0(B_0,\eta,L_Q,\nu)>0$, such that if   $N\geq N_0$,
then
 \begin{eqnarray*}
    \mE\big[ \|\zeta_t\|^2 e^{\nu N^2t-4\eta \int_0^t \|w_r|_1^2\dif r}\big]\leq 1, \ \ \forall t>0.
 \end{eqnarray*}
Furthermore,   there exists a absolutely positive  constant  $C$ such that
 \begin{eqnarray*}
   \mE \|\zeta_t\|\leq C   e^{-\frac{1}{4}\nu N^2 t}\cdot e^{\frac{2\eta}{\nu}\|w_0\|^2}, ~~\forall t>0.
 \end{eqnarray*}
\end{lemma}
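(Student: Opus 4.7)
The plan is to apply It\^o's formula to $\|\zeta_t\|^2$ and exponentiate the resulting differential inequality via It\^o on $\log\|\zeta_t\|^2$, producing an exponential supermartingale whose expectation matches the claimed bound. From the equation defining $\zeta_t$ together with integration by parts,
\[
d\|\zeta_t\|^2 = \bigl[-2B_1\|\nabla\zeta_t^l\|^2 - 2\nu\|\nabla\zeta_t^h\|^2 + 2\langle\zeta_t^h,\tilde B(w_t,\zeta_t)\rangle + \|DQ(w_t)\zeta_t\|_{\HS}^2\bigr]dt + dM_t,
\]
where $M_t$ is the local martingale coming from $DQ(w_t)\zeta_t\,dB_t$. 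The modification of the low-mode dynamics enhances dissipation: because $B_1>\nu N^2$, $\|\nabla\zeta^l\|\ge\|\zeta^l\|$ (Poincar\'e, since $|k|\ge 1$), and $\|\nabla\zeta^h\|^2\ge N^2\|\zeta^h\|^2$ (high modes satisfy $|k|>N$), one has
\[
2B_1\|\nabla\zeta_t^l\|^2+2\nu\|\nabla\zeta_t^h\|^2\ge 2\nu N^2\|\zeta_t\|^2 + \nu\|\zeta_t\|_1^2.
\]

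The cross term is the delicate piece. Using (\ref{1-5}), $\langle\zeta^h,B(\cK w,\zeta^h)\rangle=0$, so only $\langle\zeta^h,B(\cK w,\zeta^l)\rangle$ and $\langle\zeta^h,B(\cK\zeta,w)\rangle$ remain; together with (\ref{1-6}) and a Ladyzhenskaya-type interpolation, these are bounded by $C\|\zeta\|\,\|\zeta\|_1\|w\|^{1/2}\|w\|_1^{1/2}$. A two-step Young inequality---first splitting $\|\zeta\|_1$ against the surplus dissipation $\nu\|\zeta\|_1^2$ from the previous step, and then splitting $\|w\|^{1/2}\|w\|_1^{1/2}$ with a parameter tuned to produce precisely $4\eta\|w\|_1^2$---yields
\[
2|\langle\zeta_t^h,\tilde B(w_t,\zeta_t)\rangle|\le\nu\|\zeta_t\|_1^2+4\eta\|w_t\|_1^2\|\zeta_t\|^2+C(\eta,\nu)\|w_t\|^2\|\zeta_t\|^2.
\]
The first term is absorbed into the surplus dissipation, while the noise variation $\|DQ(w_t)\zeta_t\|_{\HS}^2\le L_Q^2\|\zeta_t\|^2$ and the auxiliary $C(\eta,\nu)\|w_t\|^2\|\zeta_t\|^2$ are absorbed into $\nu N^2\|\zeta_t\|^2$ by choosing $N_0=N_0(\eta,\nu,B_0,L_Q)$ sufficiently large (in expectation, via Lemmas \ref{16} and \ref{10}). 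The resulting stochastic differential inequality reads $d\|\zeta_t\|^2\le\|\zeta_t\|^2(-\nu N^2+4\eta\|w_t\|_1^2)dt+dM_t$. Applying It\^o to $\log\|\zeta_t\|^2$ and letting $\tilde M_t:=\int_0^t\|\zeta_s\|^{-2}\,dM_s$, whose bracket is uniformly bounded by $4L_Q^2 t$ so Novikov's condition is automatic, gives $\log\|\zeta_t\|^2-\log\|\zeta_0\|^2\le\int_0^t(-\nu N^2+4\eta\|w_r\|_1^2)dr+\tilde M_t-\tfrac12[\tilde M]_t$; exponentiating and taking expectations kills the Dol\'eans exponential, yielding the first assertion.

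The second assertion follows by Cauchy--Schwarz:
\[
\mE\|\zeta_t\|\le\bigl(\mE[\|\zeta_t\|^2 e^{\nu N^2 t-4\eta\int_0^t\|w_r\|_1^2 dr}]\bigr)^{1/2}\bigl(\mE[e^{-\nu N^2 t+4\eta\int_0^t\|w_r\|_1^2 dr}]\bigr)^{1/2}.
\]
The first factor is at most $1$. For the second, the hypothesis $\eta\le\nu^2/(8B_0)$ translates exactly to $4\eta/\nu\le\nu/(2B_0)$, so Lemma \ref{10} applied with parameter $4\eta/\nu$ yields $\mE\exp(4\eta\int_0^t\|w_r\|_1^2 dr)\le C\exp((4\eta/\nu)\|w_0\|^2+(\nu/2)t)$. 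Multiplying by $e^{-\nu N^2 t}$, taking the square root, and absorbing $\nu t/2$ into $\nu N^2 t/2$ for $N\ge 1$ gives the desired bound $Ce^{-\nu N^2 t/4}e^{(2\eta/\nu)\|w_0\|^2}$. The principal obstacle is the nonlinear estimate: producing the \emph{exact} coefficient $4\eta$ in the Young inequality forces the auxiliary coefficient $C(\eta,\nu)$ multiplying $\|w\|^2$ to grow as $1/\eta$, which is precisely what compels $N_0$ to depend on $\eta$.
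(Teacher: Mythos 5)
Your overall architecture (It\^o's formula for $\|\zeta_t\|^2$, a pathwise differential inequality exponentiated into a supermartingale bound, then Cauchy--Schwarz against Lemma \ref{10} using $4\eta/\nu\leq\nu/(2B_0)$) is the same as the paper's, and your final Cauchy--Schwarz step is correct. The gap is in the nonlinear estimate. After bounding both surviving cross terms by the single expression $C\|\zeta\|\,\|\zeta\|_1\|w\|^{1/2}\|w\|_1^{1/2}$ and applying Young's inequality tuned to produce $4\eta\|w\|_1^2\|\zeta\|^2$, you are left with $C(\eta,\nu)\|w_t\|^2\|\zeta_t\|^2$, and you propose to absorb this into $\nu N^2\|\zeta_t\|^2$ ``in expectation'' by taking $N$ large. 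That step is not valid: the absorption must be performed pathwise, inside the differential inequality and before any expectation is taken, and no fixed $N$ dominates the unbounded random coefficient $C(\eta,\nu)\|w_t\|^2$. The only honest way to keep that term is to carry it in the exponential weight, which (using $\|w\|\leq\|w\|_1$) replaces $4\eta\int_0^t\|w_r\|_1^2\dif r$ by $(4\eta+C(\eta,\nu))\int_0^t\|w_r\|_1^2\dif r$; since $C(\eta,\nu)\sim 1/\eta$, the hypothesis $\eta\leq\nu^2/(8B_0)$ no longer places you in the range where Lemma \ref{10} applies, and the second factor in your Cauchy--Schwarz step is no longer controlled. So the first assertion, with the exact weight $e^{-4\eta\int_0^t\|w_r\|_1^2\dif r}$, does not follow from your estimates.

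The paper closes the estimate precisely by \emph{not} unifying the two cross terms. For $\langle\zeta_t^h,B(\cK\zeta_t,w_t)\rangle$ it uses (\ref{1-6}) in the form $\hat C\|\zeta_t\|_{1/2}\|w_t\|_1\|\zeta_t^h\|$ and applies Young so that the \emph{entire} $\|w_t\|_1^2$ lands in the good term $\eta\|w_t\|_1^2\|\zeta_t\|^2$; the price is $\frac{\hat C^2}{4\eta}\|\zeta_t\|_{1/2}^2\leq\frac{\hat C^2}{4\eta}\|\zeta_t\|_1\|\zeta_t\|\leq\frac{\nu}{6}\|\zeta_t\|_1^2+C(\eta,\nu)\|\zeta_t\|^2$, so only half a power of $\|\zeta_t\|_1$ carries the large constant, and the leftover $C(\eta,\nu)\|\zeta_t\|^2$ has a \emph{deterministic} coefficient that can be absorbed by $\nu N^2\|\zeta_t\|^2$. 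For $\langle\zeta_t^h,B(\cK w_t,\zeta_t^l)\rangle$ the price is $\frac{\hat C^2}{4\eta}\|\zeta_t^l\|_1^2$, which is supported on the low modes and is eaten by the enhanced dissipation $-2B_1\|\zeta_t^l\|_1^2$ with $B_1\geq\nu N^2$ large. Your unified bound erases exactly these two structural features (a full power of the full $\|\zeta\|_1$ now carries the large constant, and the $\|w\|^{1/2}\|w\|_1^{1/2}$ split creates the unabsorbable $\|w\|^2$), which is why the argument cannot be closed along your route. A secondary point: your dissipation lower bound $2B_1\|\nabla\zeta^l\|^2+2\nu\|\nabla\zeta^h\|^2\geq 2\nu N^2\|\zeta\|^2+\nu\|\zeta\|_1^2$ fails on the high modes, where only $\nu N^2\|\zeta^h\|^2+\nu\|\zeta^h\|_1^2$ is available; this is harmless for the stated lemma but should be corrected.
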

\begin{proof}
By  It\^o's formula,
\begin{eqnarray}
\nonumber
  \dif \|\zeta_t\|^2&\leq & 2 B_1 \langle \zeta_t,  \Delta \zeta_t^l  \rangle \dif t+
  2 \nu  \langle \zeta_t,  \Delta \zeta_t^h  \rangle \dif t
  +2 \langle\zeta_t^h,\tilde{B}(w_t,\zeta_t)\rangle \dif t
  \\  \label{dp-11}&&+h_t\dif  B_t+ L_Q\|\zeta_t\|^2\dif t.
\end{eqnarray}
Here we need to estimate $\langle\zeta_t^h,\tilde{B}(w_t,\zeta_t)\rangle $, observe that
\begin{eqnarray*}
 \langle\zeta_t^h,\tilde{B}(w_t,\zeta_t)\rangle  = \langle\zeta_t^h,B(\cK \zeta_t, w_t)\rangle + \langle\zeta_t^h,B(\cK w_t,\zeta_t)\rangle,
\end{eqnarray*}
and  (\ref{1-6}), for some $\hat{C},$
\begin{eqnarray}
 \nonumber  \langle\zeta_t^h,B(\cK \zeta_t, w_t)\rangle &\leq&  \hat{C}\|\zeta_t\|_{\frac{1}{2}}\|w_t\|_1\|\zeta_t^h\|
\\  \nonumber  &\leq & \eta \|w_t\|_1^2 \|\zeta_t^h\|^2+\frac{\hat{C}^2}{4 \eta}\|\zeta_t\|_{\frac{1}{2}}^2
\\   \nonumber &\leq &  \eta \|w_t\|_1^2 \|\zeta_t^h\|^2+\frac{\hat{C}^2}{4\eta}\|\zeta_t\|_1\|\zeta_t\|
 \\ \label{1-15} &\leq &\eta \|w_t\|_1^2 \|\zeta_t\|^2+
 \frac{\nu}{6} \|\zeta_t\|_1^2 +\frac{\hat{C}^4}{8 \eta^2 \nu}\|\zeta_t\|^2,
\end{eqnarray}
and,
\begin{eqnarray*}
  \langle\zeta_t^h,B(\cK w_t,\zeta_t)\rangle &=&   \langle\zeta_t^h,B(\cK w_t,\zeta_t^l )\rangle
  \\  &\leq &  \hat{C}\|w_t\|_{\frac{1}{2}}  \|\zeta_t^l\|_1\|\zeta_t^h\|
  \\ &\leq & \eta \|\zeta_t\|^2 \|w_t\|_1^2+\frac{\hat{C}^2}{4\eta} \|\zeta_t^l\|_1^2.
\end{eqnarray*}
Therefore, by  (\ref{dp-11}),
\begin{eqnarray*}
  \dif \|\zeta_t\|^2&\leq & -2B_1\|\zeta_t^l\|_1^2\dif t -2\nu \|\zeta_t^h\|_1^2\dif t
  +4\eta   \|\zeta_t\|^2 \|w_t\|_1^2\dif t
  \\ &&+ \frac{\hat{C}^2}{2\eta}   \|\zeta_t^l\|_1^2+\frac{\nu}{3} \|\zeta_t\|_1^2+\big[\frac{\hat{C}^4}{4 \eta^2 \nu}+L_Q\big]\|\zeta_t\|^2
  \\ &&+h_t\dif  B_t.
\end{eqnarray*}
Hence, if    $B_1\geq \nu N^2 \geq  \frac{\hat{C}^2}{2\eta} + \frac{\nu}{3}+\big[\frac{\hat{C}^4}{4 \eta^2 \nu}+L_Q\big] $ and
$\frac{2}{3}\nu N^2\geq   \big[\frac{\hat{C}^4}{4 \eta^2 \nu}+L_Q\big]$,
\begin{eqnarray}\label{1-22}
 \nonumber \dif \|\zeta_t\|^2&\leq & -B_1\|\zeta_t^l\|_1^2\dif t -\nu N^2 \|\zeta_t^h\|^2\dif t
  +4\eta   \|\zeta_t\|^2 \|w_t\|_1^2 +h_t\dif  B_t
  \\ \label{1-13} &\leq & -\nu N^2 \|\zeta_t\|^2\dif t  +4\eta   \|\zeta_t\|^2 \|w_t\|_1^2\dif t +h_t\dif  B_t,
\end{eqnarray}
from which,
\begin{eqnarray}\label{1-11}
  \mE\big[ \|\zeta_t\|^2 e^{\nu N^2t-4\eta \int_0^t \|w_r|_1^2\dif r}\big]\leq 1.
\end{eqnarray}
Therefore,  by H\"older inequality,   (\ref{1-11}) and Lemma  \ref{10} ,  if   $\eta\leq \frac{\nu^2}{8B_0} $  and  $\frac{\sqrt{8B_0\eta}}{\nu}\leq N$,
\begin{eqnarray*}
\mE \|\zeta_t\| &=& \mE\Big[ \|\zeta_t\| e^{-\frac{1}{2}\nu N^2t+2\eta \int_0^t \|w_r|_1^2\dif r}\cdot  e^{\frac{1}{2}\nu N^2t-2\eta \int_0^t \|w_r|_1^2\dif r}  \Big]
\\ &\leq & \left(\mE\big[ e^{-\nu N^2t+4\eta \int_0^t \|w_r|_1^2\dif r}\big]\right)^{\frac{1}{2}}
\\
&\leq & C  e^{-\frac{1}{4}\nu N^2 t}\cdot
 \left(\mE\big[ e^{-\frac{1}{2}\nu N^2t+4\eta \int_0^t \|w_r|_1^2\dif r}\big]\right)^{\frac{1}{2}}
 \\ &\leq &C   e^{-\frac{1}{4}\nu N^2 t}\cdot e^{\frac{2\eta}{\nu}\|w_0\|^2}.
\end{eqnarray*}
\end{proof}

\begin{lemma}\label{2-3}
  For any $n\in \mN$ and $\eta\leq \frac{\nu^2}{8nB_0}$,  there exists $N_0=N_0(n,\eta,B_0,\nu,L_Q)$, such that if $N\geq N_0$,
  \begin{eqnarray*}
  \mE\[\|\z_t\|^{2n}e^{\big(\nu nN^2-n(n-1)L_Q/2\big)t-\int_0^t4\eta\|w_r\|^2\dif r}\]\leq 1.
\end{eqnarray*}
and furthermore  for some absolute constant $C,$
\begin{eqnarray*}
 \mE\[\|\z_t\|^{n}\] &\leq &Ce^{\frac{2n\eta}{\nu}\|w_0\|^2}e^{-\nu nN^2 t/2}.
\end{eqnarray*}
\end{lemma}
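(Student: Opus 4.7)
The plan is to run the exponential-supermartingale argument of Lemma \ref{dp-2} at the $n$-th power by applying It\^o's formula to $\|\z_t\|^{2n}$, and then deduce the moment bound via Cauchy--Schwarz and Lemma \ref{10}. The starting point is the intermediate differential inequality (\ref{1-13}) from the proof of Lemma \ref{dp-2},
\begin{eqnarray*}
\dif \|\z_t\|^2 \leq \bigl(-\nu N^2 + 4\eta \|w_t\|_1^2\bigr)\|\z_t\|^2 \dif t + h_t\, \dif B_t,
\end{eqnarray*}
valid whenever $N\geq N_0$. Applying It\^o to $x\mapsto x^n$ with $x=\|\z_t\|^2$ gives
\begin{eqnarray*}
\dif \|\z_t\|^{2n} = n\|\z_t\|^{2(n-1)} \dif\|\z_t\|^2 + \tfrac{n(n-1)}{2}\|\z_t\|^{2(n-2)}\dif [\|\z\|^2]_t,
\end{eqnarray*}
and the quadratic variation $\dif [\|\z\|^2]_t$, coming from $2\la \z_t, DQ(w_t)\z_t\, \dif B_t\ra$, is controlled by $L_Q \|\z_t\|^4 \dif t$ via the Lipschitz hypothesis on $Q$ -- the same estimate that produced the $L_Q\|\z_t\|^2$ correction in (\ref{dp-11}). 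After enlarging $N_0$ to absorb absolute constants, one arrives at
\begin{eqnarray*}
\dif \|\z_t\|^{2n} \leq \Bigl(-\nu nN^2 + \tfrac{n(n-1)}{2}L_Q + 4n\eta\|w_t\|_1^2\Bigr)\|\z_t\|^{2n}\dif t + \widetilde h_t\,\dif B_t.
\end{eqnarray*}

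Setting
\begin{eqnarray*}
M_t := \|\z_t\|^{2n}\exp\Bigl\{\bigl(\nu nN^2 - \tfrac{n(n-1)}{2}L_Q\bigr) t - 4n\eta \int_0^t \|w_r\|_1^2\dif r\Bigr\},
\end{eqnarray*}
a further application of It\^o shows that $M_t$ is a non-negative local supermartingale with $M_0=\|\xi\|^{2n}=1$, so the standard localization argument yields $\mE M_t \leq 1$. This is exactly the first assertion of the Lemma, with the understanding that ``$\|w_r\|^2$'' in the statement is a typographical slip for $\|w_r\|_1^2$.

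For the moment bound I would then apply Cauchy--Schwarz,
\begin{eqnarray*}
\mE\|\z_t\|^n \leq \bigl(\mE\bigl[\|\z_t\|^{2n}e^{X_t}\bigr]\bigr)^{1/2}\bigl(\mE\bigl[e^{-X_t}\bigr]\bigr)^{1/2},
\end{eqnarray*}
where $X_t$ denotes the exponent appearing in $M_t$. The first factor is $\leq 1$ by the previous step. For the second, Lemma \ref{10} -- admissible precisely because $\eta\leq \nu^2/(8nB_0)$ -- gives
\begin{eqnarray*}
\mE\exp\Bigl(4n\eta\int_0^t\|w_r\|_1^2\dif r\Bigr) \leq C\exp\Bigl(\tfrac{4n\eta}{\nu}\|w_0\|^2 + \tfrac{4n\eta B_0}{\nu} t\Bigr),
\end{eqnarray*}
with $4n\eta B_0/\nu\leq \nu/2$ under the same hypothesis. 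Choosing $N_0=N_0(n,\eta,B_0,\nu,L_Q)$ so large that $\nu nN^2 - \tfrac{n(n-1)L_Q}{2} - \tfrac{\nu}{2}$ dominates $\nu nN^2/2$ then produces the advertised prefactor $C e^{\frac{2n\eta}{\nu}\|w_0\|^2}$ and decay rate $e^{-\nu nN^2 t/2}$.

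The main, if routine, obstacle is the book-keeping for the It\^o correction when raising the power from $\|\z_t\|^2$ to $\|\z_t\|^{2n}$: one has to verify that the Lipschitz bound on $Q$ truly produces a quadratic variation of $\|\z_t\|^2$ of order $\|\z_t\|^4 \dif t$ with a constant that, after absorbing into $N_0$, leaves behind precisely the $n(n-1)L_Q/2$ factor appearing in the exponent. Once this has been nailed down, both conclusions follow immediately from the exponential-supermartingale mechanism already used in Lemma \ref{dp-2}.
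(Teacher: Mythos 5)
Your proposal is correct and follows essentially the same route as the paper: apply It\^o's formula to $\|\z_t\|^{2n}$ starting from the differential inequality (\ref{1-22})--(\ref{1-13}), absorb the quadratic-variation correction (bounded via the Lipschitz property of $Q$) into the $n(n-1)L_Q/2$ term, deduce the exponential-supermartingale bound $\mE M_t\leq 1$, and finish with Cauchy--Schwarz plus Lemma \ref{10} under the constraint $\eta\leq \nu^2/(8nB_0)$. Your reading of the exponent as $4n\eta\int_0^t\|w_r\|_1^2\,\dif r$ (rather than the $4\eta\|w_r\|^2$ printed in the statement) matches what the paper's own proof actually derives.
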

\begin{proof}
By It\^o's formula,
\begin{eqnarray*}
  \dif \|\zeta_t\|^2&= & 2 B_1 \langle \zeta_t,  \Delta \zeta_t^l  \rangle \dif t+
  2 \nu  \langle \zeta_t,  \Delta \zeta_t^h  \rangle \dif t
  +2 \langle\zeta_t^h,\tilde{B}(w_t,\zeta_t)\rangle \dif t
  \\  &&+\langle \zeta_t, DQ(w_t)\zeta_t\dif B_t\rangle+ \|DQ(w_t)\zeta_t\|^2\dif t.
\end{eqnarray*}
and
\begin{eqnarray*}
  \dif  \|\z_t\|^{2n}=n\|\z_t\|^{2n-2} \dif \|\z_t\|^2+\frac{n(n-1)}{2}\|\z_t\|^{2n-4}h_t^2\dif t,
\end{eqnarray*}
so  by (\ref{1-22})
\begin{eqnarray*}
 \dif  \|\z_t\|^{2n}&\leq & n\|\z_t\|^{2n-2} \[-\nu N^2 \|\zeta_t\|^2\dif t  +4\eta   \|\zeta_t\|^2
 \|w_t\|_1^2\dif t +h_t\dif  B_t \]
 \\ &&+ \frac{n(n-1)L_Q}{2}\|\z_t\|^{2n}\dif t.
\end{eqnarray*}
Therefore,
\begin{eqnarray*}
  \mE\[\|\z_t\|^{2n}e^{\big(\nu nN^2-n(n-1)L_Q/2\big)t-\int_0^t4n\eta\|w_r\|^2\dif r}\]\leq 1.
\end{eqnarray*}
By H\"older inequality and the above inequality,
\begin{eqnarray*}
 \mE\[\|\z_t\|^{n}\]\leq \mE\[e^{\big(-\nu nN^2+n(n-1)L_Q/2\big)t+\int_0^t4n\eta\|w_r\|^2\dif r}\]^{\frac{1}{2}},
\end{eqnarray*}
here we use the notation, for any random variable $X$, $\mE\[X\]^{p}:=\[\mE X\]^{p}.$ Hence, by Lemma \ref{10},
there exists $N_0=N_0(n,\eta,B_0,\nu,L_Q)$, such that if $N\geq N_0$, then for some absolute constant $C,$
\begin{eqnarray*}
 \mE\[\|\z_t\|^{n}\] &\leq &\mE\[e^{\big(-\nu nN^2+n(n-1)L_Q/2\big)t+\int_0^t4n\eta\|w_r\|^2\dif r}\]^{\frac{1}{2}}
 \\
 &\leq &Ce^{\frac{2n\eta}{\nu}\|w_0\|^2}e^{-\nu nN^2t/2}.
\end{eqnarray*}

\end{proof}

\subsection{ The estimate of  $\mE\[ \big|\int_0^tv_s\dif B_s\big|^2\] $ }
\begin{eqnarray}
\left(\mathbb{E}\left|\int_0^t v(s)\dif W(s)\right|^2 \right)= \int_0^t\mathbb{E}\|v(s)\|^2\dif s\leq C\int_0^t\mathbb{E}\|F_s\|^2\dif s.
\end{eqnarray}
Remaking that, by the definition of $F_t$ and
 ~$\|\pi_l \tilde{B}(u,w)\| \leq C(N) \cdot
\|u\|\cdot \|w\|$    for some
constant $C=C(N)$(see  \cite[Lemma A.4]{mattingly}),
\begin{eqnarray*}
\mathbb{E}\|F_s\|^2 &\leq& C \cdot \(\mE\|\zeta_s\|^2+\mE \big[\|w_s\|^2\|\zeta_s\|^2\big]\)
\\ &\leq & C \cdot \(\mE\|\zeta_s\|^2+\[\mE \|w_s\|^4\]^{1/2} \[\mE \|\zeta_s\|^4\]^{1/2}\)
\end{eqnarray*}
Hence, by H\"older inequality,  Lemma \ref{16} and   Lemma    \ref{2-3}, for any  $\eta\leq  \frac{\nu^2}{32B_0}$, there exist some $N_0:=N_0(B_0,\eta,L_Q,\nu)>0$, such that if     $N\geq N_0$,
then there exists
\begin{eqnarray}
\label{bo-s-3-1}
\mathbb{E}\left| \int_0^{\infty}v(s)\dif B(s)\right|^2 \leq C(N) \exp{\(\big(\frac{4\eta}{\nu} +\eta \big)\|w_0\|^2\)}.
\end{eqnarray}

\subsection{The proof of Theorem \ref{1-17}}
\begin{proof}
For any $\xi$ with $\|\xi\|=1,$ for some constant $C=C(\nu,B_0,N,\eta)$,
\begin{eqnarray*}
&& \langle  \nabla P_t \varphi (w_0),\xi \rangle
\\ &&=\mathbb{E}_{w_0}(\langle  \nabla  \varphi (w_t),\xi \rangle)
= \mathbb{E}_{w_0}(  (\nabla  \varphi) (w_t)J_{0,t}\xi )
\\&& =\mathbb{E}_{w_0}(  (\nabla  \varphi) (w_t) \mathcal{D}^vw_t   )+\mathbb{E}_{w_0}(  (\nabla  \varphi) (w_t)\rho_t )
\\&& =\mathbb{E}_{w_0}( \mathcal{D}^{v} \varphi(w_t) )+\mathbb{E}_{w_0}(  (\nabla  \varphi) (w_t)\rho_t )
\\ && = \mE_{w_0}\[  \varphi(w_t) \int_0^tv_s\dif B_s\]+\mathbb{E}_{w_0}(  (\nabla  \varphi) (w_t)\rho_t )
\\ &&\leq   \[\mE_{w_0}   \varphi(w_t)^2\]^{\frac{1}{2}} \[\mE_{w_0} (\int_0^tv_s \dif B_s)^2\]^{\frac{1}{2}}
+ \[\mE_{w_0}\big[ (\nabla  \varphi) (w_t)  \big]^2\]^{\frac{1}{2}}
\[\mE_{w_0}\|\rho_t\|^2\]^{\frac{1}{2}}
\\ &&\leq  C(N) \exp{\(\big(\frac{4\eta}{\nu} +\eta \big)\|w_0\|^2\)}\sqrt{P_t|\varphi|^2(w_0)}+
Ce^{\frac{4\eta}{\nu}\|w_0\|^2}e^{-\nu N^2 t} \sqrt{P_t\|D\varphi\|^2(x)}
\end{eqnarray*}
\end{proof}

\section{Proof of Exponential Mixing}
For getting the exponential convergence, we using the methods in
\cite{Hairer02}.
In the Assumption 4.1, 4.2, 4.3 and Theorem $\ref{5-9}$  below, we assume that we are given a random flow $\Phi_t$ on a Banach space $H$. We will assume that the map $x \mapsto \Phi_t(\omega,x)$ is $\mathcal{C}^{1}$ for almost every element $\omega$ of the underlying probability space. We will denote by $D\Phi_t$ the Fr\'{e}chet derivative of $\Phi_t(\omega,x)$ with respect to x.

Let  $\mathcal{C}(\mu_1,\mu_2)$ for the set of all measures $\Gamma$
on $H \times H$ such that $\Gamma(A \times H)=\mu_1(A)$ and
$\Gamma(H\times A)=\mu_2(A)$ for every Borel set~$ A \subset H.$ The
following three assumptions are  from \cite{Hairer02}.
\begin{assumption}\label{5-1} There exists a function $V:H\rightarrow [1,\infty)$~with
the following properties:

 \begin{enumerate}
\item  There exists two strictly increasing continuous functions $V^{*}$~and $V_{*}$~
from $[0,\infty)\rightarrow [1,\infty)$ such  that
\begin{eqnarray}\label{5-2}
V_{*}(\|x\|)\leq V(x) \leq V^{*}(\|x\|)
\end{eqnarray}
for all $x \in H$ and such that $\lim_{a\rightarrow \infty}V_{*}(a)=\infty.$
\item There exists constants C and $\kappa\geq 1$ such that
\begin{eqnarray}\label{5-3}
aV^{*}(a)\leq CV_{*}^{\kappa}(a),
\end{eqnarray}
for every $a>0$.
\item There exists a positive constants C, $r_0 <1$, a decreasing function $\zeta:[0,1]\rightarrow [0,1]$~
with $\zeta(1)<1$~such that for every $h \in H $  with $\|h\|=1$
    \begin{eqnarray}\label{5-4}
    \mathbb{E}V^{r}(\Phi_t(x))(1+\|D\Phi_t(x)h)\|)\leq CV^{r\zeta(t)}(x),
    \end{eqnarray}
    for every $x \in  H $, every  $r \in [r_0,\kappa]$, and every $t \in [0,1].$
\end{enumerate}
\end{assumption}

If Assumption $\ref{5-1}$ is satisfied, then for every Fr\'{e}chet differentiable function $\varphi: H\rightarrow R$, we introduce the following norm
\begin{eqnarray*}
\|\varphi \|_{V}=\sup_{x\in H}\frac{|\varphi(x)|+\|D\varphi(x)\|}{V(x)},
\end{eqnarray*}
and  for $r \in(0,1]$, a family of
distance $\rho_r$ on $H$ is defined by
$$
\rho_r=\inf_{\gamma} \int_0^1V^{r}(\gamma(t))\|\dot{\gamma}(t)\|dt,
$$
where the infimum runs over all paths $\gamma$~such that
$\gamma(0)=x$~and $\gamma(1)=y$. For simple, we will write $\rho$
for $\rho_1$.

\begin{assumption}\label{5-5}
There exists a $C_1>0$ and $p\in[0,1)$ so that for every $\alpha \in (0,1)$ there exists positive $T(\alpha)$
 and $C(\alpha)$ with
\begin{eqnarray}\label{5-6}
\|DP_t \varphi(x)\|\leq C_1 V^{p}(x)\left(C(\alpha)\sqrt{(P_t |\varphi|^2)(x)} +\alpha\sqrt{(P_t\|D\varphi\|^2)(x)} \right),
\end{eqnarray}
for every $x \in H$ and $t \geq T(\alpha)$.
\end{assumption}

\begin{assumption}\label{5-7}
Given any $C>0,\ r\in (0,1)$ and $\delta>0$,  there exists a $T_0$ so that for any $T \geq T_0$
there exists an  $a>0$ so that
\begin{eqnarray}\label{5-8}
\inf_{|x|,|y|\leq C} \sup_{\Gamma \in \mathcal{C}(P_T^{*}\delta_x,P_T^{*}\delta_y)}\Gamma\left\{(x',y')
\in H \times H:\rho_r(x',y')<\delta\right\}\geq a.
\end{eqnarray}
\end{assumption}

 If  the setting of the semigroup $P_t$ possesses an invariant measure $\mu_{*}$,  we  define
\begin{eqnarray*}
\|\varphi \|_{\rho}=\sup_{x \neq y}\frac{|\varphi(x)-\varphi(y) |}{\rho(x,y)}+\left|\int_{H}\varphi(x) \mu_{*}(dx)\right|.
\end{eqnarray*}

The next Theorem comes from Theorem $3.6$, Corollary 3.5  and
Theorem $4.5$  in \cite{Hairer02}.
\begin{thm}\label{5-9}
Let $\Phi_t$ be a stochastic flow on a Banach space $H$ which is almost surely $\mathcal{C}^{1}$
and satisfy Assumption $\ref{5-1}$. Denote by $P_t$ the corresponding Markov semigroup and assume that
it satisfies Assumption $\ref{5-5}$ and $\ref{5-7}$.   Then there exists a unique invariant probability
measure $\mu_*$ for $P_t$ and   exists constants $\gamma>0$ and $C>0$ such that
\begin{eqnarray*}
\|P_t\varphi- \mu_{*}\varphi\|_{\rho} &\leq &Ce^{-\gamma t}\|\varphi-\mu_{*}\varphi\|_{\rho},
\\
\|P_t\varphi- \mu_{*}\varphi\|_{V} &\leq& Ce^{-\gamma t}\|\varphi-\mu_{*}\varphi\|_{V},
\end{eqnarray*}
for every Fr$\acute{e}$chet differentiable function $\varphi: H\rightarrow \mathbb{R}$ and every $t>0.$
\end{thm}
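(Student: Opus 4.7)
The plan is to follow the Hairer--Mattingly scheme: first produce an invariant measure from the Lyapunov structure in Assumption \ref{5-1}, next show that $P_t^{*}$ is a strict contraction after a long enough time in a Wasserstein-type distance built to dualize $\|\cdot\|_{\rho_r}$, and finally upgrade the resulting exponential mixing from the $\rho$-norm to the $V$-norm. The technical heart is the contraction step, which welds together the gradient estimate in Assumption \ref{5-5}, the local coupling in Assumption \ref{5-7}, and the moment bound from Assumption \ref{5-1}.

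For existence, I would apply Assumption \ref{5-1}(3) with $r=1$ and $h=0$ to get $\mE V(\Phi_t(x))\leq CV^{\zeta(t)}(x)$ on $[0,1]$; iterating over integer times together with $(\ref{5-2})$ produces a uniform bound on $\sup_{t\geq 0}\mE V(\Phi_t(x))$ and tightness of $\frac{1}{T}\int_0^T P_t^{*}\delta_x\,\dif t$, so Krylov--Bogolyubov yields an invariant measure $\mu_*$ with $\int V\,\dif\mu_*<\infty$. Then, picking some $r\in(r_0,1)$, I would introduce the dual distance
\[
d(\mu_1,\mu_2):=\sup\Bigl\{\int_H \varphi\,\dif(\mu_1-\mu_2):\ \|\varphi\|_{\rho_r}\leq 1\Bigr\},
\]
which by Kantorovich--Rubinstein duality is equivalent to an optimal-transport distance with cost $1\wedge\rho_r$. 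The aim becomes to prove a one-step contraction $d(P_T^{*}\mu_1,P_T^{*}\mu_2)\leq\kappa\,d(\mu_1,\mu_2)$ with some $\kappa\in(0,1)$, for a large enough $T_0$ and all $T\geq T_0$, restricted to measures with bounded $V^r$-moment.

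The contraction itself will be proved by a dichotomy between far and close initial points. When $\|x\|+\|y\|$ is large, iterating $(\ref{5-4})$ at exponent $r$ and using $(\ref{5-3})$ gives $\mE V^r(\Phi_T(x))\leq\tfrac12 V^r(x)$ for $T$ large, and since $\rho_r$-Lipschitz test functions grow like $V^r$ this already forces contraction of $d$. When $x,y$ lie in a fixed ball, I would invoke Assumption \ref{5-7}: with probability at least $a$ the coupling lies within $\rho_r$-distance $\delta$, contributing at most $\delta$ to $d(P_T^{*}\delta_x,P_T^{*}\delta_y)$; on the complementary event I apply $(\ref{5-6})$ with $\alpha$ small to bound the gradient of $P_T\varphi$ by $\alpha\|\varphi\|_{\rho_r}$ up to a sup-norm term $C(\alpha)C_1V^p(x)\sqrt{P_T|\varphi|^2(x)}$, the latter being tamed via the $V^r$-Lipschitz bound on $\varphi$ and the Lyapunov moment estimate. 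Choosing $\delta,\alpha$ small and $T$ large delivers a strict contraction factor $\kappa<1$. The hard point, and the main obstacle, is precisely this balance: the gradient bound is useful only when $\varphi$ is regular, so one must trade the regularity available on the good coupling event against the $\alpha$-contraction of the gradient term on its complement, all while keeping the $V$-moments under control.

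Once the one-step contraction is in hand, Banach's fixed-point theorem applied to $P_{T_0}^{*}$ on the complete space of probability measures with finite $V^r$-moment endowed with $d$ gives uniqueness of $\mu_*$ and $d(P_t^{*}\mu,\mu_*)\leq Ce^{-\gamma t}d(\mu,\mu_*)$ with $\gamma=-T_0^{-1}\log\kappa$; dualizing back through the definition of $\|\cdot\|_\rho$ produces the first inequality $\|P_t\varphi-\mu_*\varphi\|_\rho\leq Ce^{-\gamma t}\|\varphi-\mu_*\varphi\|_\rho$. For the $V$-norm inequality, I would decompose a test function $\varphi$ with $\|\varphi-\mu_*\varphi\|_V\leq 1$ into a part that is $\rho$-Lipschitz after absorbing the factor $V^{-(1-r)}$ via $(\ref{5-3})$ and a remainder bounded by $V$; the $\rho$-estimate already proved handles the first piece, while for the remainder one iterates $(\ref{5-4})$ directly on $V$ and applies $(\ref{5-6})$ once more to control $\|DP_t\varphi(x)\|$, yielding $\|P_t\varphi-\mu_*\varphi\|_V\leq Ce^{-\gamma t}\|\varphi-\mu_*\varphi\|_V$ after adjusting the constants.
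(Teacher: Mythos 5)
The paper never proves Theorem \ref{5-9} itself---it is imported verbatim from \cite{Hairer02} (Theorem 3.6, Corollary 3.5 and Theorem 4.5)---and your sketch reconstructs precisely the strategy of that cited proof: Krylov--Bogolyubov existence from the Lyapunov structure of Assumption \ref{5-1}, a long-time contraction obtained by trading the gradient estimate (\ref{5-6}) with small $\alpha$ against the coupling event of Assumption \ref{5-7} and the $V^r$-moment dichotomy between far and close initial points, and finally the transfer of the spectral gap from the $\rho$-norm to the $V$-norm. Your one deviation---packaging the contraction as a fixed-point argument for a dual Kantorovich distance with cost $1\wedge\rho_r$ rather than contracting the weighted Lipschitz-type seminorm directly, as Hairer--Mattingly do---is equivalent by duality, so this is essentially the same approach as the (cited) proof and the outline is correct.
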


The next Lemma comes from Lemma $5.1$  in \cite{Hairer02}.
\begin{lemma}\label{5-10}
Let U be a real-valued semi-martingale
$$
dU(t,\omega)=F(t,\omega)dt+G(t,\omega)dB(t,\omega),
$$
where B is a standard Brownian motion.  Assume that there exists a
process Z and positive constants $b_1,b_2,b_3$, with $b_2>b_3,$   such
that $F(t,\omega) \leq b_1-b_2Z(t,\omega),U(t,\omega) \leq
Z(t,\omega),$   ~and $G(t,\omega)^2\leq b_3Z(t,\omega)$ almost
surely.   Then the bound
$$
\mathbb{E}\exp\left(U(t)+\frac{b_2e^{-b_2t/4}}{4}\int_0^tZ(s)ds\right)\leq \frac{b_2\exp(\frac{2b_1}{b_2})}{b_2-b_3}\exp\left(U(0)e^{-\frac{b_2}{2}t}\right),
$$
holds for every $t \geq 0.$
\end{lemma}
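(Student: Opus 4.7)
The plan is to apply Itô's formula to a cleverly chosen exponential functional of $U$ with a time-dependent scaling factor, and to arrange matters so that the resulting process becomes a supermartingale after moving the $\int Z$ term to the left-hand side. Fix $t>0$ and let $\alpha(s):=e^{-b_2(t-s)/2}$ on $[0,t]$, so that $\alpha(0)=e^{-b_2 t/2}$, $\alpha(t)=1$, $\alpha(s)\in(0,1]$, and $\alpha'(s)=\tfrac{b_2}{2}\alpha(s)$. Applying Itô's formula to $Y(s):=e^{\alpha(s)U(s)}$ gives
\begin{equation*}
dY(s)=Y(s)\Big(\alpha'(s)U(s)+\alpha(s)F(s)+\tfrac{1}{2}\alpha(s)^2 G(s)^2\Big)ds+Y(s)\alpha(s)G(s)\,dB(s).
\end{equation*}

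Next, I would bound the drift using the three hypotheses. Since $\alpha'(s)\geq 0$ and $U(s)\leq Z(s)$, one obtains $\alpha'(s)U(s)\leq \tfrac{b_2}{2}\alpha(s)Z(s)$, and combining with $F\leq b_1-b_2 Z$ and $G^2\leq b_3 Z$ gives
\begin{equation*}
\alpha'U+\alpha F+\tfrac{1}{2}\alpha^2 G^2\leq \alpha(s)b_1+\tfrac{\alpha(s)}{2}\big(\alpha(s)b_3-b_2\big)Z(s)\leq \alpha(s)b_1-\tfrac{\alpha(s)(b_2-b_3)}{2}Z(s),
\end{equation*}
using $\alpha(s)\leq 1$ and $b_2>b_3$ in the last step. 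This is the decisive computation: the hypotheses get converted into a pointwise drift inequality with an $\int Z$-absorbing term.

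From here I would introduce the compensator and consider
\begin{equation*}
M(s):=Y(s)\exp\!\left(\int_0^s\tfrac{\alpha(r)(b_2-b_3)}{2}Z(r)\,dr-\int_0^s\alpha(r)b_1\,dr\right),
\end{equation*}
which is a non-negative local supermartingale by the above drift bound, hence a supermartingale. Taking expectations at time $t$ and using $M(0)=e^{\alpha(0)U(0)}=\exp(U(0)e^{-b_2 t/2})$, together with the deterministic bound $\int_0^t\alpha(r)\,dr=\tfrac{2}{b_2}(1-e^{-b_2 t/2})\leq \tfrac{2}{b_2}$, yields
\begin{equation*}
\mathbb{E}\exp\!\left(U(t)+\int_0^t\tfrac{\alpha(r)(b_2-b_3)}{2}Z(r)\,dr\right)\leq e^{2b_1/b_2}\exp\!\big(U(0)e^{-b_2 t/2}\big).
\end{equation*}

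The final step, and the main bookkeeping obstacle, is to convert the $s$-dependent coefficient $\tfrac{\alpha(r)(b_2-b_3)}{2}$ in front of $Z$ into the claimed uniform constant $\tfrac{b_2 e^{-b_2 t/4}}{4}$. For $r\in[t/2,t]$ one has $\alpha(r)\geq e^{-b_2 t/4}$ directly. On $[0,t/2]$ I would repeat the Itô argument on that subinterval with $\alpha$ frozen at the constant value $e^{-b_2 t/4}$ (which is legitimate since the drift bound holds for any non-decreasing $\alpha$), and then splice the two estimates together; the prefactor $\tfrac{b_2}{b_2-b_3}$ will emerge from combining the two pieces. The technical delicacy lies entirely in tracking constants through this splicing, since the qualitative supermartingale argument above is what powers the estimate.
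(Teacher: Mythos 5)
The paper itself does not prove this lemma; it is imported verbatim from [Hairer02, Lemma 5.1], so your proposal can only be judged on its own terms. Your It\^o computation and the supermartingale step are correct and rigorously establish
\begin{equation*}
\mathbb{E}\exp\Big(U(t)+\tfrac{b_2-b_3}{2}\int_0^t e^{-b_2(t-r)/2}Z(r)\,dr\Big)\;\leq\;e^{2b_1/b_2}\exp\big(U(0)e^{-b_2t/2}\big),
\end{equation*}
which is a genuine estimate. The gap lies entirely in the final step, and it is not mere bookkeeping: the drift-absorption scheme you use cannot produce the stated constants. Concretely, with any nondecreasing weight $\alpha\leq 1$ the coefficient of $-Z(s)$ available in the drift is at most $\alpha(s)b_2-\tfrac{1}{2}\alpha(s)^2b_3-\alpha'(s)\leq \alpha(s)b_2$. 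Since the factor $\exp(U(0)e^{-b_2t/2})$ on the right forces $\alpha(0)=e^{-b_2t/2}$, the coefficient you can place in front of $Z(0)$ is at most $b_2e^{-b_2t/2}$, which falls below the required $\tfrac{b_2}{4}e^{-b_2t/4}$ as soon as $t>4\ln 4/b_2$. Your proposed repair makes this worse in a different way: freezing $\alpha\equiv e^{-b_2t/4}$ on $[0,t/2]$ changes the initial value to $\exp(U(0)e^{-b_2t/4})$, which is strictly weaker than the claimed $\exp(U(0)e^{-b_2t/2})$ when $U(0)>0$ (as in the application, where $U(0)=\eta\|w_0\|^2$); and on $[t/2,t]$ the extractable coefficient near $s=t$ is $\tfrac{b_2-b_3}{2}$, which is smaller than $\tfrac{b_2}{4}e^{-b_2t/4}$ whenever $b_3>b_2/2$ and $t$ is not large. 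So the splicing cannot be made to close for all $b_3<b_2$.

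The telltale sign is the prefactor $\tfrac{b_2}{b_2-b_3}$ in the statement, which your argument never generates: constants of the form $\gamma/(\gamma-\delta)$ come from integrating the exponential tail of $\sup_s\big(M(s)-\tfrac{\lambda}{2}[M,M](s)\big)$ against $e^{\delta x}$, i.e., from the exponential-martingale inequality of Lemma \ref{2-1} applied to the martingale part $\int_0^sG\,dB$ together with $[M,M]\leq b_3\int Z$, rather than from absorbing $\tfrac{1}{2}\alpha^2G^2$ into the drift of $e^{\alpha U}$. That extra ingredient is what decouples the $Z$-coefficient from the factor $b_2-b_3$ (which then migrates into the multiplicative constant) and is needed to reach the stated bound. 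As written, your argument proves a correct but genuinely different inequality.
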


\begin{proposition}  Let  $H = L^2_0$ be  the space of real-valued square-integrable functions on the torus $[-\pi,\pi]^2$  with vanishing mean, and the  random flow $\Phi_t$ on the  space $H$  is  given by
the solution to  $(\ref{1-2})$.
 Under   the conditions of Theorem
$\ref{5-9}$,       Assumption  $\ref{5-1}$ holds with
$$ V(x)= V_{\eta_0}(x)=e^{\eta_0 \|x\|^2},\  \ \ \eta_0=\frac{\nu}{16B_0}.$$
\end{proposition}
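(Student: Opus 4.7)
The plan is to verify each of the three items of Assumption~\ref{5-1} with the choice $V(x)=V_{\eta_0}(x)=e^{\eta_0\|x\|^2}$, $\eta_0=\nu/(16B_0)$. Items (1) and (2) are elementary: take $V_*(a)=V^*(a)=e^{\eta_0 a^2}$, which are continuous and strictly increasing from $[0,\infty)$ onto $[1,\infty)$, so (\ref{5-2}) holds with equality; and (\ref{5-3}) reduces to $a\leq Ce^{(\kappa-1)\eta_0 a^2}$, which holds for any $\kappa>1$ since $ae^{-(\kappa-1)\eta_0 a^2}$ is bounded on $[0,\infty)$. I will fix $\kappa\in(1,4]$ so that the constraint $2r\eta_0\leq\nu/(2B_0)$ coming from Lemma~\ref{16} still leaves slack.

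Item (3) is the main technical step. I will split the expectation by Cauchy--Schwarz,
\begin{eqnarray*}
\mE V^r(w_t)(1+\|J_{0,t}h\|)\leq \mE V^r(w_t)+\big(\mE V^{2r}(w_t)\big)^{1/2}\big(\mE\|J_{0,t}h\|^2\big)^{1/2},
\end{eqnarray*}
and apply Lemma~\ref{16} to bound both $\mE V^r(w_t)$ and $(\mE V^{2r}(w_t))^{1/2}$ by $C\exp(r\eta_0 e^{-\nu t}\|w_0\|^2)$, which is permissible for $r\leq\kappa\leq 4$ since then $2r\eta_0\leq 8\eta_0=\nu/(2B_0)$. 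The Jacobian factor is handled by applying It\^o's formula to $\|J_{0,t}h\|^2$ via (\ref{0-1}). Using the cancellation (\ref{1-5}), the nonlinear bound (\ref{1-6}) with (\ref{1-7})--(\ref{1-8}) to absorb the $\nu\|J\|_1^2$ dissipation, and Hypothesis~\textbf{H2} to control $\|DQ(w)J\|_{\cL_2}^2\leq L_Q^2\|J\|^2$, one obtains
\begin{eqnarray*}
d\|J_{0,t}h\|^2\leq \big(C\nu^{-1}\|w_t\|_1^2+L_Q^2\big)\|J_{0,t}h\|^2\,dt+dM_t,
\end{eqnarray*}
with $d[M,M]_t\leq 4L_Q^2\|J_{0,t}h\|^4\,dt$. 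Passing to $\log\|J_{0,t}h\|^2$ converts $M_t$ into a martingale of quadratic variation $\leq 4L_Q^2 t$, so that Lemma~\ref{2-1} together with Lemma~\ref{10} (applied to $\mE\exp(\alpha\int_0^t\|w_s\|_1^2\,ds)$ for admissible $\alpha$) yields, for $t\in[0,1]$,
\begin{eqnarray*}
\big(\mE\|J_{0,t}h\|^2\big)^{1/2}\leq C\exp(C_*\|w_0\|^2)
\end{eqnarray*}
for some $C_*$ depending only on $\nu,B_0,L_Q$.

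Combining the two estimates produces
\begin{eqnarray*}
\mE V^r(w_t)(1+\|J_{0,t}h\|)\leq C\exp\big((r\eta_0 e^{-\nu t}+C_*)\|w_0\|^2\big),
\end{eqnarray*}
and to obtain (\ref{5-4}) I take $\zeta(t)=e^{-\nu t/2}$, which is decreasing on $[0,1]$ with $\zeta(0)=1$ and $\zeta(1)=e^{-\nu/2}<1$; the required inequality becomes $r\eta_0 e^{-\nu t}+C_*\leq r\eta_0 e^{-\nu t/2}$ uniformly for $r\in[r_0,\kappa]$, $t\in[0,1]$. The main obstacle is ensuring that $C_*$ is strictly smaller than $r_0\eta_0\bigl(\inf_{t\in[0,1]}(e^{-\nu t/2}-e^{-\nu t})\bigr)$, a small positive number. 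The available slack comes from the strict inequality $\kappa\eta_0=\nu/(4B_0)<\nu/(2B_0)$: replacing Cauchy--Schwarz by a H\"older split $(p,q)$ with $q$ close to $1$ redistributes more mass into the $V^{rq}(w_t)$ factor while keeping $(\mE\|J_{0,t}h\|^p)^{1/p}$ under control, and sharper bookkeeping in the exponential-martingale step lets one drive $C_*$ below the required threshold, possibly after further shrinking of $\kappa$ or $r_0$. This delicate constant-chasing is where I expect the bulk of the work to lie.
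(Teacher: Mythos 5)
Your handling of items (1) and (2) is fine, but the decomposition you choose for item (3) cannot be repaired, and the ``delicate constant-chasing'' you defer to at the end is chasing a threshold that is in fact zero. After your Cauchy--Schwarz split you arrive at a bound of the form $C\exp\big((r\eta_0 e^{-\nu t}+C_*)\|w_0\|^2\big)$ with a constant $C_*>0$ independent of $t$: any estimate of $\mE\|J_{0,t}h\|^2$ obtained by controlling $\mE\exp\big(\alpha\int_0^t\|w_s\|_1^2\,\dif s\big)$ through Lemma \ref{10} necessarily costs a factor $e^{c\,\alpha\|w_0\|^2/\nu}$ whose exponent does \emph{not} vanish as $t\to0$, because the energy inequality allows essentially all of $\|w_0\|^2$ to be converted into $\int_0^t\|w_s\|_1^2\,\dif s$ in arbitrarily short time. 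Assumption \ref{5-1}(3) must hold for \emph{every} $t\in[0,1]$ with $\zeta(t)\leq1$, so your required inequality $r\eta_0 e^{-\nu t}+C_*\leq r\eta_0\zeta(t)$ forces $C_*\leq 0$ at $t=0$. Equivalently, $\inf_{t\in[0,1]}\big(e^{-\nu t/2}-e^{-\nu t}\big)=0$ (attained at $t=0$), so the quantity you describe as ``a small positive number'' is zero, and no choice of H\"older exponents, of $\kappa$, or of $r_0$ can rescue a decoupled estimate in which the Jacobian contributes a $t$-independent $e^{C_*\|w_0\|^2}$.

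The fix is structural: the Jacobian and the Lyapunov factor must stay coupled inside a single Cauchy--Schwarz so that the exponentials of $\int_0^t\|w_s\|_1^2\,\dif s$ cancel instead of being estimated separately. This is exactly what the paper does. Lemma \ref{5-10} upgrades the energy estimate to one with a \emph{surplus} positive multiple of $\int_0^t\|w_s\|_1^2\,\dif s$ in the exponent, namely $\mathbb{E}\exp\big(\eta\|w_t\|^2+c_t\int_0^t\eta\|w_s\|_1^2\,\dif s\big)\leq C\exp\big(\eta e^{-\nu t}\|w_0\|^2\big)$, while the Jacobian satisfies the weighted moment bound (\ref{5-13}), $\mathbb{E}\big[\|\xi_t\|^2\exp\big(-h(\eta)t-\int_0^t\eta\|w_s\|_1^2\,\dif s\big)\big]\leq1$. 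Writing $\mathbb{E}\big[e^{\eta\|w_t\|^2}\|\xi_t\|\big]$ as the product of $e^{\eta\|w_t\|^2+\frac{b\eta\nu}{2}\int_0^t\|w_s\|_1^2\dif s}$ and $\|\xi_t\|e^{-\frac{b\eta\nu}{2}\int_0^t\|w_s\|_1^2\dif s}$ and applying Cauchy--Schwarz, the dissipation integrals cancel exactly; the Jacobian then contributes only the factor $e^{h(b\eta\nu)t/2}$, which equals $1$ at $t=0$ and is bounded on $[0,1]$, and the coefficient of $\|w_0\|^2$ in the exponent is exactly $\eta e^{-\nu t}\leq\eta\zeta(t)$ with $\zeta(t)=e^{-\nu t/2}$, which is what (\ref{5-4}) requires.
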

\begin{proof}
By It\^{o}
formula,
\begin{equation*}
d  \eta \Arrowvert  w_{t} \Arrowvert^2+2\eta \nu \| w_{t} \Arrowvert^2_1dt
=2\eta \langle w_t,Q(w_t)dB_t\rangle +\eta \| Q(w_t) \|^2dt.
\end{equation*}
From Lemma $\ref{5-10}$,
 $$
\mathbb{E}\exp\left(U(t)+\frac{b_2e^{-b_2t/4}}{4}\int_0^tZ(s)ds\right)\leq \frac{b_2\exp(\frac{2b_1}{b_2})}{b_2-b_3}
\exp\left(U(0)e^{-\frac{b_2}{2}t}\right).
$$
where $U(t)=\eta \|w_t\|^2,\ Z(t)=\eta \|w_t\|_1^2,\ b_1=\eta B_0,\ b_2=2\nu, \ b_3=4\eta B_0$.
Therefore  when  $\eta \leq \frac{\nu}{4B_0}$,
\begin{eqnarray}\label{5-12}
\mathbb{E}\exp\left(\eta \|w_t\|^2+\frac{2\nu e^{-2\nu t/4}}{4}\int_0^t\eta \|w_s\|_1^2 ds\right)
\leq 2\exp(\frac{\eta B_0}{\nu})\exp\left(\eta \|w_0\|^2e^{-\frac{2\nu}{2}t}\right).
\end{eqnarray}
For $\|\xi\|=1$,denote $\xi_t=J_t\xi=Dw_t^x \xi$, where $x$ is the
initial value and $D$ is the differential operator with $x$. So $\xi_t$
satisfies  the following equation
 \begin{eqnarray}\label{5-11}
d\xi_t=\nu \Delta \xi_t dt +\tilde{B}(w_t,\xi_t)dt+DQ(w_t)\xi_t dB_t,
\end{eqnarray}
and thus
\begin{eqnarray*}
\dif \|\xi_t\|^2 \leq  -2 \nu \|\xi_t\|_1^2\dif t+2\langle B(\mathcal{K}\xi_t,w_t),\xi_t\rangle \dif t+L_Q\|\xi_t\|^2 dt+h_t\dif W_t.
\end{eqnarray*}
By the  the similar  step to get (\ref{1-15}), one arrives at
\begin{eqnarray*}
2\langle B(\mathcal{K}\xi_t,w_t),\xi_t\rangle \leq \eta \|w_t\|_1^2\|\xi_t\|^2+\nu \|\xi_t\|_1^2+\frac{16\hat{C}^4}{\eta^2 \nu}\|\xi_t\|^2,
\end{eqnarray*}
and
\begin{eqnarray*}
d\|\xi_t\|^2 \leq \eta \|w_t\|_1^2\|\xi_t\|^2dt+(\frac{16\hat{C}^4}{\eta^2 \nu}+L_Q)\|\xi_t\|^2dt+h_tdW_t.
\end{eqnarray*}
Define the function $h(\eta)=(\frac{16\hat{C}^4}{\eta^2 \nu}+L_Q),$
from the above inequality one arrives at
\begin{eqnarray}\label{5-13}
\mathbb{E}\Big\{\|\xi_t\|^2 \exp{(-h(\eta)t-\int_0^t \eta \|w_s\|_1^2ds)}\Big\}\leq 1,\forall \eta>0.
\end{eqnarray}
Set $b=e^{-\frac{\nu}{2}},$  $\eta \leq \frac{\nu}{8B_0},\ t\in [0,1]$.
From $(\ref{5-12})$ and $(\ref{5-13}),$
\begin{eqnarray*}
\mathbb{E}\big\{\exp{(\eta \|w_t\|^2)}\|\xi_t\|\big\}&=&\mathbb{E}\[\exp{(\eta \|w_t\|^2)}
\exp{(\frac{b \eta \nu }{2}\int_0^t\|w_s\|_1^2ds)} \cdot \|\xi_t\|\exp{(-\frac{b \eta \nu}{2}\int_0^t\|w_s\|_1^2ds)}\]
\\&\leq &  \Big(\mathbb{E}\exp{(2\eta  \|w_t\|^2+b \eta \nu \int_0^t\|w_s\|_1^2ds)}\Big)^{\frac{1}{2}}
 \Big( \mathbb{E}\big[ \|\xi_t\|^2\exp{(-b \eta \nu \int_0^t\|w_s\|_1^2ds)}\big]\Big)^{\frac{1}{2}}
\\&\leq   & \Big(2\exp(\frac{2\eta B_0}{\nu})
\exp\big(2\eta \|w_0\|^2e^{-\frac{2\nu}{2}t}\big)\Big)^{\frac{1}{2}} \exp{(\frac{h(b\eta \nu)}{2}t)} \\
&=&   C(\eta, B_0,\nu)\exp\Big(\eta \|w_0\|^2e^{-\nu t}\Big)
  \exp{(\frac{h(b\eta \nu)}{2}t)}.
  \end{eqnarray*}
Set $\eta_0=\frac{\nu}{16 B_0}$, we  know that the above inequality
is satisfied for all $\eta \in[0,2\eta_0]$.  So $w_t$ satisfies
Assumption $\ref{5-1}$ for $V(x)=e^{\eta_0 \|x\|^2},\ \kappa=2,\
r_0=\frac{1}{2}$ , $V_*(a)=V^*(a) =e^{\eta_0 a^2}$ and $\zeta_t=e^{-\frac{1}{2}\nu t}$.
\end{proof}
\appendix
\section{ }
Let $r=\tilde{w}(t,B,w_0^2)-w(t,B,w_0^1).$
\begin{proposition}\label{p-3}
Assume \textbf{H1} and \textbf{H2} hold. There exists $\gamma_1,\gamma_2>0$, $K_0>0$ and  $N_0=N_0(B_0,\nu,L_Q)$   such that for any $K\geq K_0$ and $N\geq N_0$ and any $(t,w_0^1, w_0^2)\in (0,\infty)\times H\times H$,
\begin{eqnarray*}
 \mE\[\|r\|^2\]\leq 2  e^{\gamma_1 \|w_0^1\|^2}\|r(0)\|^{2}e^{-\gamma_2 t}.
\end{eqnarray*}
here $r(t)=\tilde{w}(t,B,w_0^2)-w(t,B,w_0^1)=\tilde{w}-w.$
\end{proposition}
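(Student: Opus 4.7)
The plan is to work directly with the equation satisfied by $r = \tilde w - w$, run an Itô energy argument on $\|r\|^2$ (and then $\|r\|^4$), and close the estimate by combining a stochastic integrating factor with the exponential moment bound on $\int_0^t\|w_s\|_1^2\,ds$ furnished by Lemma \ref{10}. Subtracting (\ref{p-1}) from the (sign-corrected, via \textbf{H3}) drift of (\ref{p-2}), and decomposing $B(\mathcal K\tilde w,\tilde w)-B(\mathcal K w,w)=B(\mathcal K r,\tilde w)+B(\mathcal K w,r)$, gives
\[
dr=\nu\Delta r\,dt+B(\mathcal K r,\tilde w)\,dt+B(\mathcal K w,r)\,dt-KP_N r\,dt+(Q(\tilde w)-Q(w))\,dB_t.
\]
Applying Itô to $\|r\|^2$, the antisymmetry (\ref{1-5}) kills $\langle B(\mathcal K w,r),r\rangle$ and $\langle B(\mathcal K r,r),r\rangle$, so only $2\langle r,B(\mathcal K r,w)\rangle$ survives. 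Estimating this by (\ref{1-6}) with $(s_1,s_2,s_3)=(1,0,1)$, (\ref{1-7}) (so that $\|\mathcal K r\|_1=\|r\|$), and Young's inequality gives $2|\langle r,B(\mathcal K r,w)\rangle|\le\nu\|r\|_1^2+C(\nu)\|w\|_1^2\|r\|^2$, while \textbf{H2} produces $\|Q(\tilde w)-Q(w)\|^2_{\cL_2}\le L_Q^2\|r\|^2$.

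Writing $\mu:=(2K)\wedge(\nu N^2)$ and splitting $-\nu\|r\|_1^2\le-\nu N^2\|Q_N r\|^2$ against the low-mode feedback $-2K\|P_N r\|^2$, the drift collapses to $-\mu\|r\|^2$, yielding
\[
d\|r\|^2\le(-\mu+L_Q^2+C(\nu)\|w_t\|_1^2)\|r\|^2\,dt+dM_t,\qquad d[M,M]_t\le 4L_Q^2\|r\|^4\,dt.
\]
A second application of Itô to $\|r\|^4$ then produces the drift $(-2\mu+6L_Q^2+2C(\nu)\|w_t\|_1^2)\|r\|^4\,dt$; using the integrating factor $\exp(\lambda' t-2C(\nu)\int_0^t\|w_s\|_1^2\,ds)$ with $\lambda':=2\mu-6L_Q^2$ turns the left side into a super-martingale, so
\[
\mathbb E\bigl[\|r_t\|^4 e^{\lambda' t-2C(\nu)\int_0^t\|w_s\|_1^2 ds}\bigr]\le\|r(0)\|^4.
\]

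To extract $\mathbb E[\|r_t\|^2]$, I would write $\|r_t\|^2=\|r_t\|^2 e^{\lambda' t/2-C(\nu)\int}\cdot e^{-\lambda' t/2+C(\nu)\int}$ and apply Cauchy--Schwarz:
\[
\mathbb E[\|r_t\|^2]\le\mathbb E\bigl[\|r_t\|^4 e^{\lambda' t-2C(\nu)\int}\bigr]^{1/2}\mathbb E\bigl[e^{-\lambda' t+2C(\nu)\int}\bigr]^{1/2}.
\]
The first factor is $\le\|r(0)\|^2$ by the super-martingale bound. For the second, Lemma \ref{10} applied with $\eta=2C(\nu)/\nu$ (legal provided $2C(\nu)/\nu\le\nu/(2B_0)$) gives $\mathbb E[e^{2C(\nu)\int_0^t\|w_s\|_1^2 ds}]\le Ce^{(2C(\nu)/\nu)\|w_0^1\|^2+(2C(\nu)B_0/\nu)t}$. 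Taking $K_0,N_0$ so large that $\mu>3L_Q^2+C(\nu)B_0/\nu$ (possible since $\mu$ grows with both $K$ and $N$) forces the exponent in $t$ to be strictly negative, and the conclusion follows with $\gamma_1=C(\nu)/\nu$ and $\gamma_2=(\lambda'-2C(\nu)B_0/\nu)/2$.

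The main obstacle is the random multiplier $C(\nu)\|w\|_1^2$ appearing in front of $\|r\|^2$ in the drift: contraction of $r$ holds only on average, and the quantitative analysis requires a three-way balance between (i) the constant $C(\nu)$ paid for Young in the trilinear term $\langle B(\mathcal K r,w),r\rangle$, (ii) the admissible exponent $\eta\le\nu/(2B_0)$ in Lemma \ref{10}, and (iii) the dissipation rate $\mu=(2K)\wedge(\nu N^2)$ supplied jointly by the low-mode nudging and the viscous decay on high modes. The simultaneous thresholds $K\ge K_0$ and $N\ge N_0$ in the statement encode exactly this balance, and the prefactor $e^{\gamma_1\|w_0^1\|^2}$ is the price paid for the exponential moment of $\int_0^t\|w_s\|_1^2\,ds$ when the reference initial datum $w_0^1$ is large.
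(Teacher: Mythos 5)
Your overall strategy---subtract the equations, run It\^o on $\|r\|^2$ and then on $\|r\|^4$, and close via Cauchy--Schwarz against the exponential moment of $\int_0^t\|w_s\|_1^2\,ds$ from Lemma \ref{10}---is exactly the paper's, and most of the bookkeeping (the antisymmetry cancellations, the Lipschitz bound on $\delta Q$, the collapse of the dissipation to $-\mu\|r\|^2$ with $\mu=(2K)\wedge(\nu N^2)$) is correct. But there is one genuine gap, and it sits precisely at the point you flag with the parenthetical ``legal provided $2C(\nu)/\nu\leq\nu/(2B_0)$''. Estimating the surviving trilinear term via (\ref{1-6}) with $(s_1,s_2,s_3)=(1,0,1)$ gives $2|\langle B(\cK r,w),r\rangle|\leq 2\hat C\|r\|\,\|w\|_1\|r\|_1\leq \varepsilon\|r\|_1^2+(\hat C^2/\varepsilon)\|w\|_1^2\|r\|^2$, and since $\varepsilon$ cannot exceed the available viscous dissipation $2\nu$, the coefficient in front of $\|w\|_1^2\|r\|^2$ is bounded below by $\hat C^2/(2\nu)$: it is a \emph{fixed} constant determined by $\nu$ alone, not a tunable parameter. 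After the $\|r\|^4$ step and Cauchy--Schwarz you must therefore control $\mE\exp(2C(\nu)\int_0^t\|w_s\|_1^2\,ds)$, and Lemma \ref{10} only delivers this when $2C(\nu)\leq\nu\cdot\frac{\nu}{2B_0}$, i.e.\ under a smallness condition of the type $B_0\lesssim\nu^3$. That condition is not among the hypotheses, and taking $K$ or $N$ large does not help here, since they enter only through $\mu$; so as written your argument proves the proposition only for small noise or large viscosity.

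The paper avoids this by a different estimate of the same term: $\langle B(\cK r,w),r\rangle\leq\hat C\|w\|_1\|r\|\,\|r\|_{1/2}\leq\eta\|w\|_1^2\|r\|^2+\frac{\hat C^2}{4\eta}\|r\|_{1/2}^2$, followed by the interpolation (\ref{1-8}), $\|r\|_{1/2}^2\leq\|r\|_1\|r\|\leq\varepsilon\|r\|_1^2+\frac{1}{4\varepsilon}\|r\|^2$. This decouples the two roles: the coefficient $\eta$ of $\|w\|_1^2\|r\|^2$ can be made arbitrarily small (so that Lemma \ref{10} applies for every $B_0$), while the coefficient of $\|r\|_1^2$ is kept below $\nu/6$ by shrinking $\varepsilon$ independently; the price is a large constant $C(\eta,\varepsilon)\|r\|^2$ in the drift, which is exactly what the hypotheses $K\geq K_0$ and $N\geq N_0$ are there to absorb. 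If you replace your Young step by this interpolation, the remainder of your argument goes through essentially as written.
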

\begin{proof}
For any function $f,$ denote $\delta f=f(\tilde{w})-f(w).$
Taking the fifference between (\ref{p-1}) and (\ref{p-2}), we obtain
\begin{eqnarray*}
  \dif r &=& \nu \Delta r\dif t-KP_Nr\dif t+\delta Q(w)\dif B_t+\[B(\cK \tilde{w},\tilde{w})-B(\cK w,w)\]\dif t
  \\ &=& \nu \Delta r\dif t-KP_Nr\dif t+\delta Q(w)\dif B_t+\[B(\cK \tilde{w}, r)+B(\cK r,w)\] \dif t.
\end{eqnarray*}
Apply It\^o's formula to $\|r\|^2$, we have
\begin{eqnarray*}
  \dif \|r\|^2=-2\nu \|r\|_1^2\dif t -2K\|P_Nr\|^2\dif t+2\langle B(\cK r,w), r\rangle\dif t +2\langle r, \delta Q(w) \dif B_t\rangle+\|\delta Q(w)\|^2_{\cL_2(U,H)}.
\end{eqnarray*}
Remarking that for any $\eta>0,$
\begin{eqnarray*}
 \langle B(\cK r,w), r\rangle & \leq &  \|w\|_1 \cdot  \|r\|\cdot \|r\|_{1/2}
 \\ &\leq & \eta \|w\|_1^2\|r\|^2+C(\eta) \|r\|_{1/2}^2
 \\ &\leq & \eta \|w\|_1^2\|r\|^2+\frac{\nu}{6}\|r\|_1^2+C(\eta)\|r\|^2
\end{eqnarray*}
then
\begin{eqnarray*}
  \dif \|r\|^2 &\leq &  -\frac{3\nu}{2} |r|_1^2\dif t -2K|P_Nr|^2\dif t+  C\|w\|_1^2 \|r\|^2  \dif t +2\langle r, \delta Q(w) \dif B_t\rangle+L_Q \|r\|^2
  \\ &\leq & \big[-\frac{3}{2}\nu N^2 +\eta \|w_t\|_1^2 \big]\|r\|^2\dif t + +2\langle r, \delta Q(w) \dif B_t\rangle,
\end{eqnarray*}
Integrating this formula and taking the expectation, if follows
\begin{eqnarray*}
  \mE\[ \|r\|^2\exp\{\nu N^2 t-\eta \int_0^t \|w_s\|_1^2\dif s\} \]\leq \|r(0)\|^2.
\end{eqnarray*}
Apply It\^o's formula to $\big[\|r\|^2\big]^2$, with a similar step, one arrives at
\begin{eqnarray*}
  \dif \|r\|^4\leq \big[-\nu N^2 +\eta \|w_t\|_1^2 \big]\|r\|^4\dif t + +2\langle r, \delta Q(w) \dif B_t\rangle,
\end{eqnarray*}
and
\begin{eqnarray*}
  \mE\[ \|r\|^4\exp\{\nu N^2 t-\eta \int_0^t \|w_s\|_1^2\dif s\} \]\leq \|r(0)\|^4.
\end{eqnarray*}
It follows from the above inequality and   Lemma \ref{10} that for some $\gamma_1,\gamma_2>0,$
\begin{eqnarray*}
 \mE\[\|r\|^2\]\leq 2C  e^{\gamma_1 \|w_0^1\|^2}\|r(0)\|^{2}e^{-\gamma_2 t}.
\end{eqnarray*}

\end{proof}










\bibliographystyle{elsarticle-num}



\end{document}